\newcommand{\dd}{\mathrm{d}}
\newcommand{\E}{\mathbb{E}}
\newcommand{\R}{\mathbb{R}}
\newcommand{\scal}[2]{\left\langle #1, #2 \right\rangle}
\newcommand{\red}{}
\renewcommand{\leq}{\leqslant}
\renewcommand{\geq}{\geqslant}
\newcommand{\ls}{\leqslant}
\newcommand{\gr}{\geqslant}
\DeclareMathOperator{\Var}{Var}
\def\thm@space@setup{%
  \thm@preskip=12pt plus 0pt minus 0pt
  \thm@postskip=0pt plus 0pt minus 0pt
}
\xpatchcmd{\proof}{6\p@\@plus6\p@\relax}{\z@skip}{}{}
\newtheorem{theorem}{Theorem}
\newtheorem{lemma}[theorem]{Lemma}
\newtheorem{corollary}[theorem]{Corollary}
\theoremstyle{remark}
\newtheorem{remark}[theorem]{Remark}
\theoremstyle{definition}
\title{Sharp bounds on $p$-norms for sums of independent uniform random variables, $0 < p < 1$}
\author{Giorgos Chasapis}
\author{Keerthana Gurushankar}
\author{Tomasz Tkocz}
\address{Carnegie Mellon University; Pittsburgh, PA 15213, USA.}
\email{\{gchasapi,  kgurusha,  ttkocz\}@andrew.cmu.edu}
\thanks{TT's research supported in part by NSF grant DMS-1955175.}
\date{\today}
\begin{document}

\begin{abstract} 
We provide a sharp lower bound on the $p$-norm of a sum of independent uniform random variables in terms of its variance when $0 < p < 1$. We address an analogous question for $p$-R\'enyi entropy for $p$ in the same range.
\end{abstract}

\maketitle

\bigskip

\begin{footnotesize}
\noindent {\em 2020 Mathematics Subject Classification.} Primary 60E15; Secondary 26D15.

\noindent {\em Key words. Sharp moment comparison, Khinchin inequalities, Sums of independent random variables, Uniform random variables, R\'enyi entropy.} 
\end{footnotesize}

\bigskip

\section{Introduction and results}

Moment comparison inequalities for sums of independent random variables, {\red that is} Khinchin-type inequalities, first established by Khinchin for Rademacher random variables (random signs) in his proof of the law of the iterated logarithm (see \cite{K}), have been extensively studied ever since his work. Particularly challenging, interesting and conducive to new methods is the question of sharp constants in such inequalities. We only mention in passing several classical as well as recent references, \cite{BC, H, HNT, Ko, KLO, LO-best, NO, S}. This paper finishes the pursuit of sharp constants in $L_p - L_2$ Khinchin inequalities for sums of independent uniform random variables, addressing the range $0 < p < 1$. We are also concerned with a $p$-R\'enyi entropy analogue.

\subsection{Moments}
Let $U_1, U_2, \dots$ be independent random variables uniform on $[-1,1]$. As usual, $\|X\|_p = (\E|X|^p)^{1/p}$ is the $p$-norm of a random variable $X$. Given $p > -1$, let $c_p$ and $C_p$ be the best constants such that for every integer $n \geq 1$ and real numbers $a_1, \dots, a_n$, we have
\begin{equation}\label{eq:khin}
c_p\left(\sum_{j=1}^n a_j^2\right)^{1/2} \leq \left\|\sum_{j=1}^n a_jU_j \right\|_p \leq C_p\left(\sum_{j=1}^n a_j^2\right)^{1/2},
\end{equation}
or in other words, since $\|\sum a_jU_j\|_2 = {\red \sqrt{\Var(\sum a_jU_j)}} = 3^{-1/2}\left(\sum a_j^2\right)^{\red 1/2}$,
%these are the best constants in the $L_p - L_2$ Khinchin inequality for weighted sums of the $U_j$ 
finding $c_p$ and $C_p$ amounts to extremising the $p$-norm of the sum $\sum a_jU_j$ subject to a fixed variance,
\[
c_p = \inf  \left\|\sum_{j=1}^n a_jU_j \right\|_p, \qquad  C_p = \sup \left\|\sum_{j=1}^n a_jU_j \right\|_p,
\]
where the infimum and supremum and taken over all integers $n \geq 1$ and unit vectors $a = (a_1, \dots, a_n)$ in $\R^n$.

For $p > 1$, the optimal constants $c_p, C_p$ were found by Lata\l a and Oleszkiewicz in \cite{LO} (see also \cite{ENT} for an alternative approach and \cite{BC, KK} for generalisations in higher dimensions). They read
\begin{equation}\label{eq:LO}
\begin{split}
c_p &= \|U_1\|_p = (1+p)^{-1/p}, \\
C_p &= \lim_{n\to\infty} \left\|\frac{U_1 + \dots + U_n}{\sqrt{n}}\right\|_p = \|Z\|_p/\sqrt{3}, \quad p > 1,
\end{split}
\end{equation}
where $Z$ here and throughout the text denotes a standard $N(0,1)$ Gaussian random variable. In fact stronger results are available (extremisers are known via Schur-convexity for each fixed $n$). 

For $-1 < p < 0$, the behaviour is complicated by a phase transition (similar to the case of random signs as established by Haagerup in \cite{H}). It has recently been proved in \cite{CKT} that
\begin{align*}
c_p &= \min\left\{\|Z\|_p/\sqrt{3}, \left\|U_1+U_2\right\|_p/\sqrt{2}\right\} = \begin{cases}\|Z\|_p/\sqrt{3}, & -0.793.. < p < 0, \\  \left\|U_1+U_2\right\|_p/\sqrt{2}, & -1 < p {\red \ls}  -0.793..,  \end{cases}
\end{align*}
and the limiting behaviour of $c_p$ as $p \to -1^+$ recovers Ball's celebrated cube slicing inequality from \cite{B}.

The fact that
\[
C_p = \|U_1\|_p, \qquad -1 < p < 1,
\]
follows easily from unimodality and Jensen's inequality (see, e.g. Proposition 15 in \cite{ENT}). 

Thus what is unknown is the optimal value of $c_p$ for $0 < p < 1$ and this paper fills out this gap. Our main result reads as follows.

\begin{theorem}\label{thm:cp}
For $0 < p < 1$, $c_ p = \|Z\|_p/\sqrt{3}$ {\red is} the best constant in \eqref{eq:khin}.
\end{theorem}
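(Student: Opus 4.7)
\emph{Plan.} The natural entry point is the L\'evy--Khintchine/Fourier representation
\[
|x|^p \;=\; \kappa_p\int_0^\infty \frac{1-\cos(tx)}{t^{1+p}}\,\dd t,\qquad 0<p<2,
\]
with a positive constant $\kappa_p$. Combined with $\E\cos(tS)=\prod_{j=1}^n\frac{\sin(a_jt)}{a_jt}$ for $S=\sum_j a_jU_j$ and $\E\cos(tZ/\sqrt3)=e^{-t^2/6}$, the desired inequality $\|S\|_p\ge\|Z\|_p/\sqrt3$ (under $\sum a_j^2=1$) is equivalent to
\[
J(a)\;:=\;\int_0^\infty\frac{e^{-t^2/6}-\prod_{j=1}^n\frac{\sin(a_jt)}{a_jt}}{t^{1+p}}\,\dd t \;\ge\; 0.
\]

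The strategy is to prove that $a\mapsto\E|\sum a_jU_j|^p$ is Schur-convex in the variables $(a_j^2)_j$ on the simplex $\{\sum a_j^2=1\}$. By the $T$-transform characterisation of Schur-convex functions, this reduces to the two-coordinate statement
\[
\E|aU_1+bU_2+W|^p \;\ge\; \E\Big|\tfrac{\sqrt{a^2+b^2}}{\sqrt2}(U_1+U_2)+W\Big|^p
\]
for every symmetric random variable $W$ independent of $U_1,U_2$; applying the Fourier representation once more, this in turn is equivalent to
\[
\int_0^\infty \phi_W(t)\,\frac{\mathrm{sinc}^2\!\big(\sqrt{(a^2+b^2)/2}\,t\big)-\mathrm{sinc}(at)\mathrm{sinc}(bt)}{t^{1+p}}\,\dd t\;\ge\;0,
\]
where $\phi_W$ is the characteristic function of $W$ (itself a finite product of sincs in our application). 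Granting Schur-convexity, the infimum of $\|S\|_p$ over unit vectors in $\R^n$ is attained at $(1/\sqrt n,\ldots,1/\sqrt n)$; letting $n\to\infty$, the central limit theorem together with uniform integrability (easily furnished by $\|n^{-1/2}\sum_{j=1}^nU_j\|_2^2=1/3$) identifies $\|Z\|_p/\sqrt3$ as the global infimum, consistent with the fact noted in the introduction that $(1,0,\ldots,0)$ \emph{maximises} $\|S\|_p$ for $0<p<1$.

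The main obstacle is the two-coordinate inequality. A Taylor expansion at $t=0$ yields
\[
\mathrm{sinc}^2\!\big(\sqrt{(a^2+b^2)/2}\,t\big)-\mathrm{sinc}(at)\mathrm{sinc}(bt) \;=\; \frac{(a^2-b^2)^2}{360}\,t^4 + O(t^6),
\]
so the integrand has the correct sign in a neighbourhood of the origin. At large $t$, however, the sinc factors oscillate and no pointwise inequality will do. I anticipate the crux of the proof to be a careful splitting of the $t$-integral at a $p$-dependent cutoff, combining the crude bound $|\mathrm{sinc}(x)|\le\min(1,1/|x|)$ with explicit smoothness/decay estimates for $\phi_W$ to ensure that the positive small-$t$ contribution dominates the oscillatory tail. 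A plausible fallback is a variance-preserving interpolation $Y_s:=\sqrt{1-s}\,S+\sqrt{s/3}\,Z'$, $s\in[0,1]$, and proving $s\mapsto\E|Y_s|^p$ is non-increasing via Gaussian integration by parts; the principal difficulty there would be controlling the fractional negative moment $\E|Y_s|^{p-2}$, which is genuinely delicate for $p\in(0,1)$ and would require an independent integrability argument near $Y_s=0$.
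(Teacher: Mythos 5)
There is a genuine gap: the central reduction you propose—Schur-convexity of $(a_1^2,\dots,a_n^2)\mapsto \E\bigl|\sum_j a_jU_j\bigr|^p$ on the simplex—is false for $0<p<1$, so the two-coordinate ($T$-transform) inequality you would need for every admissible $W$ cannot hold. By the results of Baernstein and Culverhouse (Theorems 2 and 3 and the computation following Corollary 1 in \cite{BC}), a necessary condition for Schur-convexity or Schur-concavity of this functional is the convexity, respectively concavity, of $x\mapsto \E|U_1+\sqrt{x}|^p$, and this holds only for $p\geq 1$; for $p<1$ the functional is neither Schur-convex nor Schur-concave, even though the two endpoint facts ($(1,0,\dots,0)$ maximises, the equal-weight limit minimises) are both true. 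Your Taylor expansion at $t=0$ is correct but only certifies the sign of the integrand near the origin; the claimed integral inequality fails for some choices of $a,b,W$, which is exactly why no splitting of the $t$-integral can rescue it. The fallback Gaussian interpolation runs into the same structural obstruction (it is a Lindeberg-type argument whose sign analysis again hinges on convexity properties of $|\cdot|^p$ that fail for $p<1$), beyond the integrability issue you flag.

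The paper's proof starts from the same Fourier representation but then takes a different turn: instead of comparing two coefficient configurations, it applies the AM--GM inequality to $\bigl|\prod_j\phi(a_jt)\bigr|$ to decouple the problem into the single-parameter integral inequality $\mathcal{I}_p(s)\geq\mathcal{I}_p(\infty)$ comparing $|\mathrm{sinc}|^s$ with a Gaussian of matching variance. That inequality is established via the Nazarov--Podkorytov lemma on modified distribution functions (which handles the oscillatory tail by showing $G-F$ changes sign exactly once), and since it only holds for all $s\geq 1$ when $p>0.6$, an induction on the number of summands in the spirit of Nazarov--Podkorytov and \cite{CKT} is used to remove the resulting restriction $a_j^2\leq\tfrac12$ for small $p$. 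If you want to salvage your write-up, this decoupling step is the missing idea to pursue.
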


We record for future use that
\[
\|Z\|_p^p = \frac{1}{\sqrt{2\pi}}\int_{-\infty}^\infty |x|^pe^{-x^2/2} \dd x = \frac{2^{p/2}}{\sqrt{\pi}}\Gamma\left(\frac{1+p}{2}\right).
\]

\subsection{R\'enyi entropy}
For $p \in [0,\infty]$, the $p$-R\'enyi entropy of a random variable $X$ with density $f$ is defined as (see \cite{R}),
\[
h_p(X) = \frac{1}{1-p}\log\left(\int_{\R} f^p\right)
\]
with $p \in \{0,1,\infty\}$ defined by taking the limit: $h_0(f) = \log|\text{supp}(f)|$ is the logarithm of the Lebesgue measure of the support of $f$, $h_1(f) = -\int f\log f$ is the Shannon entropy, and $h_\infty = -\log\|f\|_\infty$, where $\|f\|_\infty$ is the $\infty$-norm of $f$ (with respect to Lebesgue measure). The question of maximising R\'enyi entropy under a variance constraint (or more generally, a moment constraint) for general distributions has been fully understood and leads to the notion of \emph{relative entropy} that is of importance in information theory, providing a natural way of measuring distance to the extremal distributions (see \cite{CHV, JV, LYZ, M}). In analogy to Theorem \ref{thm:cp}, we provide an answer for $p$-R\'enyi entropies, $0 < p < 1$, for sums of uniforms under the variance constraints.
\begin{theorem}\label{thm:ent}
Let $0 < p < 1$. For every unit vector $a = (a_1, \dots, a_n)$, we have
\[
h_p\left(U_1\right) \leq h_p\left(\sum_{j=1}^n a_jU_j\right) \leq h_p\left(Z/\sqrt{3}\right).
\]
\end{theorem}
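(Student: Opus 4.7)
Since $0<p<1$, we have $(1-p)^{-1}>0$, so $h_p$ is a strictly monotone increasing function of $\int f^p\,dx$. Writing $f_S$ for the density of $S=\sum_j a_jU_j$ and $f_G$ for that of $G=Z/\sqrt 3$, the theorem is equivalent to
\[
2^{1-p} \;=\; \int f_{U_1}^p\,dx \;\leq\; \int f_S^p\,dx \;\leq\; \int f_G^p\,dx.
\]
Note this involves $L^p$-norms of densities, distinct from (though formally analogous to) the moments $\E|S|^p$ controlled by Theorem \ref{thm:cp}. My plan is to prove both inequalities through a single Schur-concavity statement.

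Consider the functional $\Phi(a)=\int f_{\sum_j a_jU_j}^p\,dx$ as a function of $(a_j^2)_{j=1}^n$ on the simplex $\sum a_j^2=1$. I expect $\Phi$ to be Schur-concave. Granting this, the minimum is attained at a vertex $e_k$, giving $S=U_k$ and $\Phi=2^{1-p}$ (the lower bound), while the supremum is approached at the centroid $(1/n,\dots,1/n)$. By the central limit theorem, together with Pr\'ekopa's theorem (each $f_{S_n}$ is log-concave) and subgaussian tail bounds providing a uniform dominating function, $\int f_{S_n}^p\,dx\to\int f_G^p\,dx$ as $n\to\infty$, which yields the upper bound.

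By the standard two-coordinate reduction, Schur-concavity reduces to the following: for any independent background random variable $W=\sum_{j\geq 3}a_jU_j$ and parameters with $r^2+s^2$ fixed, the map $(r,s)\mapsto\int f_{rU_1+sU_2+W}^p\,dx$ is increasing as $|r-s|$ decreases, with minimum at $(\sqrt{r^2+s^2},0)$ and maximum at $r=s$. The \textbf{main obstacle} is proving this two-variable convolution inequality. My plan is to combine the Fourier representation $\widehat{f_{rU_1+sU_2}}(t)=\tfrac{\sin(rt)}{rt}\cdot\tfrac{\sin(st)}{st}$ with the subordinator identity
\[
y^p \;=\; \frac{p}{\Gamma(1-p)}\int_0^\infty (1-e^{-uy})\,u^{-p-1}\,du \qquad (y>0,\ 0<p<1),
\]
which linearises $\int f^p\,dx$ into a weighted integral over $u$ of the functional $\int(1-e^{-uf(x)})\,dx$. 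This should reduce matters to a one-parameter family of estimates on products of sinc functions, of the type used in the proof of Theorem \ref{thm:cp}. Verifying the correct sign across the full parameter range, and handling the compact support of $f_S$ versus the unbounded support of $f_G$ carefully in the CLT limit, will be the technical core of the argument.
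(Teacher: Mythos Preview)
Your proposal hinges on proving that $\Phi(a)=\int f_{\sum_j a_jU_j}^p\,dx$ is Schur-concave in $(a_j^2)$. This would in particular identify the maximiser of $h_p\big(\sum_{j=1}^n a_jU_j\big)$ for each \emph{fixed} $n$ as the centroid $a=(1/\sqrt n,\dots,1/\sqrt n)$. The paper explicitly records this very question as open (see the closing paragraph of Section~\ref{sec:ent}), so your plan amounts to attacking a harder problem than the theorem requires. Your sketch of how to establish the two-coordinate inequality is not convincing: the subordinator identity linearises $\int f^p$ into $\int_0^\infty u^{-p-1}\big(\int(1-e^{-uf(x)})\,dx\big)\,du$, but $e^{-uf_S(x)}$ has no useful Fourier structure, so the promised reduction ``to a one-parameter family of estimates on products of sinc functions'' does not materialise. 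The Fourier product $\prod_j\mathrm{sinc}(a_jt)$ represents $\widehat{f_S}$, not any transform of $e^{-uf_S}$, and no analogue of the AM--GM step from Section~\ref{sec:proof} is available here.

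The paper's argument sidesteps Schur-concavity entirely and is much shorter. For the lower bound it uses monotonicity of $p\mapsto h_p$ together with the (Shannon) entropy power inequality to get $h_p(S)\geq h_1(S)\geq h_1(U_1)=\log 2=h_p(U_1)$. For the upper bound it applies H\"older in the form $(\int f^p)^{1/p}(\int g^p)^{(p-1)/p}\leq \int fg^{p-1}$ with $g$ the Gaussian density, then expands $g^{p-1}$ as a power series in $x^2$ to reduce the problem to the even-moment comparison $\E S^{2k}\leq \E(Z/\sqrt3)^{2k}$, which is exactly the Lata\l a--Oleszkiewicz result \eqref{eq:LO}. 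Neither step requires knowing the extremiser for fixed $n$.
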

The lower bound is a simple consequence of the entropy power inequality. The upper bound is interesting in that the maximizer among \emph{all} distributions of fixed variance is \emph{not} Gaussian (rather, with density proportional to $(1+x^2)^{-1/(1-p)}$ for $\frac{1}{3} < p < 1$ and it does not exist for $p < \frac{1}{3}$, see e.g. \cite{CHV}). It is derived from the $L_q - L_2$ Khinchin inequality for even $q$.

\subsection{Organisation of the paper}
In Section \ref{sec:proof} we give an overview of the proof of Theorem \ref{thm:cp} and show a reduction to two main steps: an integral inequality and an inductive argument. Then in Section \ref{sec:lemmas} we gather all technical lemmas needed to accomplish these steps which is then done in Sections \ref{sec:intineq} and \ref{sec:ind}, respectively. Section \ref{sec:ent} contains a short proof of Theorem \ref{thm:ent}.

\subsection*{Acknowledgments.} 
We should very much like to thank Alexandros Eskenazis for the stimulating correspondence. {\red We are also indebted to anonymous referees for many valuable comments which helped significantly improve the manuscript.}

\section{Proof of the main result}\label{sec:proof}

\subsection{Overview}
We follow an approach developed by Haagerup in \cite{H}, with major simplifications advanced later by Nazarov and Podkorytov in \cite{NP}. In essence, the argument begins with a Fourier-analytic integral representation for the power function $|\cdot|^p$ which allows to take advantage of independence and in turn, by virtue of the AM-GM inequality, to reduce the problem to establishing a certain integral inequality involving the Fourier transforms of the uniform and Gaussian distributions. Since this inequality holds only in a specific range of parameters, additional arguments are needed, mainly an induction on the number of summands $n$ (similar problems were faced in e.g. \cite{CKT, NP, Ko}). In our case, this is further complicated by the fact that the base of the induction fails for large values of $p$ (roughly for $p > 0.7$).

{\red
\begin{remark}
We point out that the main difference between the regimes $p {\red \gr} 1$ and $p < 1$ is that for the former convexity type arguments allow to establish stronger comparison results, namely the Schur-convexity/concavity of the function
\[
(\sqrt{x_1}, \dots, \sqrt{x_n}) \mapsto \E\left|\sum_{j=1}^n \sqrt{x_j}U_j\right|^p.
\]
By combining Theorems 2 and 3 of \cite{BC} (see also (6.1) therein), a necessary condition for this is the concavity/convexity of the function $x \mapsto \E|U_1 + \sqrt{x}|^p$. The calculations following Corollary 1 in the same work show that this is the case only for $p \geq 1$. In other words, when $p<1$, the function above is neither Schur-convex nor Schur-concave and the Fourier-analytic approach seems to be indispensable.
\end{remark}
}

\subsection{Details}
The aforementioned Fourier-analytic formula reads as follows (it can be found for instance in \cite{H}, but we sketch its proof for completeness).

\begin{lemma}\label{lm:fourier}
Let $0 < p < 2$ and $\kappa_p = \frac{2}{\pi}\Gamma(1+p)\sin\left(\frac{\pi p}{2}\right)$. For a random variable $X$ in $L_2$ with characteristic function $\phi_X(t) = \E e^{itX}$, we have
\[
\E|X|^p = \kappa_p \int_0^\infty \frac{1 - \mathrm{Re} \phi_X(t)}{t^{p+1}} \dd t.
\]
\end{lemma}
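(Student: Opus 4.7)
The plan is to first establish the pointwise Fourier representation
\[
|x|^p = \kappa_p \int_0^\infty \frac{1-\cos(tx)}{t^{p+1}} \, \dd t, \qquad x \in \R,
\]
valid for $0<p<2$, and then take expectations and swap the order of integration via Fubini. Since cosine is even, writing $\E\cos(tX) = \mathrm{Re}\,\phi_X(t)$ will give the claimed formula at once.

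For the pointwise identity, by symmetry it suffices to treat $x>0$, and a change of variables $u = tx$ in the integral reduces everything to evaluating the single constant
\[
I_p := \int_0^\infty \frac{1-\cos u}{u^{p+1}} \, \dd u,
\]
since then the right-hand side becomes $\kappa_p I_p \cdot |x|^p$. To compute $I_p$, I would integrate by parts with $\dd v = u^{-p-1}\dd u$ and $w = 1-\cos u$; the boundary terms vanish (at $0$ because $1-\cos u \sim u^2/2$ and $p<2$, at $\infty$ because $p>0$), yielding $I_p = \tfrac1p \int_0^\infty u^{-p}\sin u \, \dd u$. The remaining integral is a standard Mellin transform, equal to $\Gamma(1-p)\cos(\pi p/2)$ for $0<p<2$. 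Using the reflection formula $\Gamma(1-p)\Gamma(1+p) = \pi p/\sin(\pi p)$ together with the double-angle identity $\sin(\pi p) = 2\sin(\pi p/2)\cos(\pi p/2)$ and simplifying gives
\[
I_p = \frac{\pi}{2\Gamma(1+p)\sin(\pi p /2)} = \frac{1}{\kappa_p},
\]
as desired.

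To pass from the pointwise identity to the statement for $X$, I apply it to each realization and want to push $\E$ inside the integral. Fubini is justified by absolute integrability: the integrand $(1-\cos(tX))/t^{p+1}$ is non-negative, and using the bound $1-\cos(tX) \leq \min(t^2 X^2/2,\, 2)$, we get
\[
\E \int_0^\infty \frac{1-\cos(tX)}{t^{p+1}} \, \dd t \leq \int_0^1 \frac{t^{1-p}}{2}\E X^2 \, \dd t + \int_1^\infty \frac{2}{t^{p+1}} \, \dd t < \infty
\]
since $X \in L_2$ and $0<p<2$. Swapping the expectation and the integral, and noting $\E\cos(tX) = \mathrm{Re}\,\phi_X(t)$ since cosine is even, completes the proof.

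There is no real obstacle here: the argument is entirely classical, and the only mildly delicate point is the evaluation of $I_p$, which amounts to an integration by parts followed by the Gamma reflection formula.
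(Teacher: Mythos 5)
Your proposal is correct and follows essentially the same route as the paper, which simply states that a change of variables yields the pointwise identity $|x|^p = \kappa_p\int_0^\infty (1-\cos(tx))t^{-p-1}\,\dd t$ and then takes expectations. Your evaluation of the normalizing constant via integration by parts, the Mellin transform of $\sin$, and the reflection formula, together with the Tonelli justification using $1-\cos(tX)\leq\min(t^2X^2/2,2)$ and $X\in L_2$, correctly fills in the details the paper leaves implicit.
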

\begin{proof}
A change of variables {\red establishes} $|x|^p = \kappa_p \int_0^\infty \frac{1-\cos(tx)}{t^{p+1}} \dd t$, $x \in \R$. {\red We then apply this} to $X$ and take the expectation.
\end{proof}

We begin the proof of Theorem \ref{thm:cp}.
Let $0 < p < 1$ and $c_p = \|Z\|_p/\sqrt{3}$.  Let $a_1, \dots, a_n$ be {\red (without loss of generality)} nonzero real numbers with $\sum_{j=1}^n a_j^2 = 1$. {\red By symmetry of the uniform distribution we assume without loss of generality that} they are in fact positive. From Lemma \ref{lm:fourier}, we obtain
\[
\E\left|\sum_{j=1}^n a_jU_j\right|^p = \kappa_p\int_0^\infty \frac{1-\prod_{j=1}^n \phi(a_jt)}{t^{1+p}} \dd t,
\]
where we have used independence and put $\phi(t) = \E e^{itU_1} = \frac{\sin t}{t}$ to be the characteristic function of the uniform distribution. We seek a sharp lower-bound on this expression (attained when $a_1 = \dots = a_n = \frac{1}{\sqrt{n}}$ and $n \to \infty$, as anticipated by Theorem \ref{thm:cp}). By the AM-GM inequality,
\[
\left|\prod_{j=1}^n \phi(a_jt)\right| \leq \sum_{j=1}^n a_j^2|\phi(a_jt)|^{1/a_j^{2}}.
\]
As a result,
\[
\E\left|\sum_{j=1}^n a_jU_j\right|^p \geq \sum_{j=1}^n a_j^2\mathcal{I}_p(1/a_j^{2}),
\]
where we have set
\[
\mathcal{I}_p(s) = \kappa_p\int_0^\infty \frac{1-\left|\frac{\sin(t/\sqrt{s})}{t/\sqrt{s}}\right|^s}{t^{p+1}} \dd t, \qquad s \geq 1.
\]
Note that $\frac{\sin(t/\sqrt{s})}{t/\sqrt{s}} = 1 - \frac{t^2}{6s} + O(1/s^2)$ for a fixed $t$ as $s \to \infty$ and consequently,
\[
\mathcal{I}_p(\infty) = \lim_{s\to\infty} \mathcal{I}_p(s) = \kappa_p\int_0^\infty \frac{1-e^{-t^2/6}}{t^{p+1}} \dd t = \E|Z/\sqrt{3}|^p,
\]
where the last equality follows from Lemma \ref{lm:fourier} because $e^{-t^2/6}$ is the characteristic function of $Z/\sqrt{3}$, $Z \sim N(0,1)$ (the exchange of the order of the limit and integration in the second equality can be easily justified by truncating the integral, see, e.g., (15) in \cite{CKT}). In particular, if for some $p$ and $s_0$,
\begin{equation}\label{eq:Ip}
\mathcal{I}_p(s) \geq \mathcal{I}_p(\infty), \qquad \text{for all } s \geq s_0,
\end{equation}
then
\begin{equation}\label{eq:cor}
\E\left|\sum_{j=1}^n a_jU_j\right|^p \geq \E|Z/\sqrt{3}|^p = c_p^p,
\end{equation}
as long as $1/a_j^2 \geq s_0$ for each $j$. If \eqref{eq:Ip} were true for all $0 < p < 1$ with $s_0 = 1$, then the proof of Theorem \ref{thm:cp} would be complete. Unfortunately, that is not the case. In Section \ref{sec:intineq} we show the following result.

\begin{theorem}\label{thm:s>1}
Inequality \eqref{eq:Ip} holds for every $0.6 < p < 1$ with $s_0 = 1$.
\end{theorem}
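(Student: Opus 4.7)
My approach would follow the Nazarov--Podkorytov method \cite{NP}, reducing the family of inequalities \eqref{eq:Ip} for $s \geq 1$ to a single base case at $s = 1$.

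After the rescaling $u = t/\sqrt{s}$, the inequality $\mathcal{I}_p(s) \geq \mathcal{I}_p(\infty)$ becomes
\[
\int_0^\infty \frac{e^{-su^2/6} - \left|\frac{\sin u}{u}\right|^s}{u^{p+1}}\, du \geq 0.
\]
Using the layer-cake identity $a^s = s\int_0^1 y^{s-1} \mathbf{1}_{\{a > y\}}\, dy$ for $a \in [0,1]$ and Fubini,
\[
\int_0^\infty \frac{\left|\frac{\sin u}{u}\right|^s - e^{-su^2/6}}{u^{p+1}}\, du = s\int_0^1 y^{s-1} H(y)\, dy,
\]
where
\[
H(y) = \int_0^\infty \left[\mathbf{1}_{\{|\sin u/u| > y\}} - \mathbf{1}_{\{e^{-u^2/6} > y\}}\right] \frac{du}{u^{p+1}}.
\]
Both indicators equal $1$ near $u = 0$, cancelling the singularity of $u^{-p-1}$, so $H$ is a well-defined finite function of $y \in (0,1)$. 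The goal becomes $\int_0^1 y^{s-1} H(y)\, dy \leq 0$ for all $s \geq 1$.

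The crucial structural ingredient is a \emph{single-sign-change} claim: there exists $y^* \in (0,1)$ with $H > 0$ on $(0, y^*)$ and $H < 0$ on $(y^*, 1)$. Near $y = 1$ the Taylor expansions $\sin u/u = 1 - u^2/6 + u^4/120 + O(u^6)$ and $e^{-u^2/6} = 1 - u^2/6 + u^4/72 + O(u^6)$ give $e^{-u^2/6} > |\sin u/u|$ for small $u > 0$, which makes $\{e^{-u^2/6} > y\}$ strictly larger than $\{|\sin u/u| > y\}$ near the origin and forces $H(y) < 0$. Near $y = 0$, the infinitely many sinc-lobes cause the $\mu$-measure of $\{|\sin u/u| > y\}$ (with $d\mu = du/u^{p+1}$) to exceed that of the single interval $\{e^{-u^2/6} > y\}$, so $H(y) > 0$. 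Granting the single crossing, the Nazarov--Podkorytov monotonicity lemma applied with weights $\phi_s(y) = y^{s-1}$ and $\psi(y) = 1$ (so that $\phi_s/\psi$ is nondecreasing on $(0,1)$ for $s \geq 1$) yields
\[
\int_0^1 y^{s-1} H(y)\, dy \leq (y^*)^{s-1} \int_0^1 H(y)\, dy,
\]
reducing the whole range $s \geq 1$ to the single base case
\[
\int_0^1 H(y)\, dy = \int_0^\infty \frac{|\sin u/u| - e^{-u^2/6}}{u^{p+1}}\, du \leq 0, \qquad p \in (0.6, 1).
\]

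Two points will demand real work. Establishing the \emph{uniqueness} of the sign change of $H$ requires a careful comparison of the distribution functions $y \mapsto \mu(\{|\sin u/u| > y\})$ and $y \mapsto \mu(\{e^{-u^2/6} > y\})$, likely by monitoring their $y$-derivatives lobe by lobe and exploiting that contributions from higher sinc-lobes are small relative to the first. More seriously, the base case at $s = 1$ is essentially the theorem itself at a distinguished value and admits no obvious algebraic simplification; one must prove it by direct estimation. The natural plan is to split the integral at $u = \pi$, bound the first-lobe contribution by a sharp pointwise inequality between $\sin u/u$ and a well-chosen Gaussian profile, and estimate the tail lobe by lobe using $|\sin u/u| \leq 1/u$. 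The threshold $p > 0.6$ presumably marks precisely where this base-case estimate first closes, and monotonicity in $p$ of a suitably normalized version of the integral may be needed to reduce the final verification to a finite list of numerical checks.
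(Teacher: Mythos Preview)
Your proposal is correct and follows essentially the same route as the paper: the Nazarov--Podkorytov single-crossing argument for the distribution functions (your $H(y)$ coincides with the paper's $G(y)-F(y)$ with respect to $d\mu=t^{-p-1}\,dt$), reducing \eqref{eq:Ip} to the base case $s=1$, and then handling that base case by first proving monotonicity in $p$ and then a numerical verification at $p=0.6$. The paper carries out exactly the two ``real work'' steps you flag---the lobe-by-lobe derivative comparison for the sign change and the delicate estimate of $H(p,1)$---so your outline matches its architecture.
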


As a result, when $0.6 < p < 1$, \eqref{eq:cor} holds for arbitrary $a_j$ and the proof of Theorem~\ref{thm:cp} is complete in this case. For smaller values of $p$, $s_0$ has to be increased.

\begin{theorem}\label{thm:s>2}
Inequality \eqref{eq:Ip} holds for every $0 < p < 1$ with $s_0 = 2$.
\end{theorem}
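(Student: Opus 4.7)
The plan is to follow the Nazarov--Podkorytov/Haagerup strategy directly on the difference $\mathcal{I}_p(s) - \mathcal{I}_p(\infty)$. The substitution $u = t/\sqrt{s}$ in both integrals recasts the inequality \eqref{eq:Ip} with $s_0 = 2$ as
\[
J_p(s) := \int_0^\infty \frac{e^{-su^2/6} - \left|\frac{\sin u}{u}\right|^s}{u^{p+1}}\, du \;\geq\; 0, \qquad s \geq 2,\ 0 < p < 1.
\]
The Weierstrass product $\frac{\sin u}{u} = \prod_{n \geq 1}(1 - u^2/(n^2\pi^2))$ gives, on $(0,\pi)$, the identity $-\log \frac{\sin u}{u} = \sum_{k \geq 1} \zeta(2k)u^{2k}/(k\pi^{2k})$, a power series with all nonnegative coefficients and leading term $u^2/6$ (since $\zeta(2) = \pi^2/6$). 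Consequently $\frac{\sin u}{u} \leq e^{-u^2/6}$, and retaining also the $k=2$ term, $\frac{\sin u}{u} \leq e^{-u^2/6 - u^4/180}$, on $(0,\pi)$. In particular the integrand of $J_p(s)$ is nonnegative on $(0,\pi)$.

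Splitting the integral at $u = \pi$ and discarding the (nonnegative) Gaussian tail on $(\pi,\infty)$ yields
\[
J_p(s) \;\geq\; J^+_p(s) \;-\; \int_\pi^\infty \frac{|\sin u|^s}{u^{s+p+1}}\,du \;\geq\; J^+_p(s) \;-\; \frac{\pi^{-s-p}}{s+p},
\]
where $J^+_p(s) := \int_0^\pi \bigl(e^{-su^2/6} - (\sin u/u)^s\bigr) u^{-p-1}\,du \geq 0$ and the last step uses $|\sin u| \leq 1$. For the positive contribution, the sharper pointwise bound gives
\[
J^+_p(s) \;\geq\; \int_0^\pi e^{-su^2/6}\bigl(1 - e^{-su^4/180}\bigr) u^{-p-1}\,du,
\]
which via elementary inequalities on $1 - e^{-x}$ (e.g.\ $1 - e^{-x} \geq x/(1+x)$) and the substitution $v = su^2/6$ reduces to explicit incomplete-Gamma expressions with transparent dependence on $(s,p)$.

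The main obstacle will be verifying $J^+_p(s) \geq \pi^{-s-p}/(s+p)$ uniformly across the strip $s \geq 2$, $0 < p < 1$. The tightest corner is $s$ close to $2$: the tail bound $\pi^{-s-p}/(s+p)$ is largest there while the available positive contribution is smallest, and a numerical sanity check at $s = 2$, $p = 1/2$ already shows the margin is only a fraction of a percent of either side. In that regime the crude lower bound above threatens to lose too much, and refinements will likely be necessary --- either by retaining further terms of the series for $-\log(\sin u/u)$ to sharpen the pointwise bound, or by exploiting the vanishing of $\sin$ at $u = n\pi$ and estimating the tail period-by-period. A possible simplification is to reduce to the endpoint $s = 2$ via a monotonicity-in-$s$ argument on $[2,\infty)$, where $\mathcal{I}_p(2) = \E\left|\frac{U_1+U_2}{\sqrt{2}}\right|^p = \frac{2^{(p+2)/2}}{(p+1)(p+2)}$ admits the closed form above and the claim becomes a single-variable inequality in $p$ to be checked by elementary means.
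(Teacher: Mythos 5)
Your setup is correct and your closed form $\mathcal{I}_p(2)=\E\left|\tfrac{U_1+U_2}{\sqrt2}\right|^p=\tfrac{2^{(p+2)/2}}{(p+1)(p+2)}$ is right, but both routes you sketch have a genuine gap. The direct route fails quantitatively: your bounds require
\[
\int_0^\pi e^{-su^2/6}\bigl(1-e^{-su^4/180}\bigr)u^{-p-1}\,\dd u \;\geq\; \frac{\pi^{-s-p}}{s+p},
\]
and at $s=2$ with $p$ small this is false. As $p\to0^+$ the right-hand side tends to $\pi^{-2}/2\approx 0.051$ while the left-hand side is about $0.037$; the quantity you are trying to bound, $J_p(2)=\frac{1}{\kappa_p}\bigl(\tfrac{2^{p+1}}{(p+1)(p+2)}-\tfrac{1}{\sqrt\pi}(4/3)^{p/2}\Gamma(\tfrac{1+p}{2})\bigr)$, tends to only about $0.03$ there, so the combined loss from discarding the Gaussian tail on $(\pi,\infty)$, replacing $|\sin u|^s$ by $1$, and truncating $-\log(\sin u/u)$ at the $u^4$ term exceeds the entire margin. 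You half-anticipate this ("refinements will likely be necessary"), but without those refinements there is no proof, and $s=2$ with $p$ near $0$ is precisely where the inequality is tightest.

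The fallback you offer as a "possible simplification" --- monotonicity in $s$ on $[2,\infty)$ followed by the endpoint check at $s=2$ --- is in fact the entire content of the theorem, and it is not free. $J_p(s)$ is not obviously monotone in $s$: the integrand $g^s-f^s$ with $g=e^{-t^2/6}$, $f=|\sin t/t|$ oscillates in sign on $(\pi,\infty)$. The paper's route is the Nazarov--Podkorytov lemma (Lemma \ref{lm:NP}): one must show that the difference $G-F$ of the modified distribution functions of $g$ and $f$ with respect to $\dd\mu=t^{-p-1}\dd t$ changes sign exactly once on $(0,1)$, which yields that $s\mapsto\frac{1}{sy_*^s}J_p(s)$ is increasing, and only then does positivity at $s=2$ propagate to all $s\geq2$. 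Verifying that single sign change is the bulk of Sections \ref{sec:lemmas} and \ref{sec:intineq} (estimates on the local maxima and slopes of the sinc function, the derivative comparison $F'>G'$ for $y$ above $y_{30}$, and an explicit comparison of $F$ and $G$ for small $y$). The endpoint inequality $\mathcal{I}_p(2)\geq\E|Z/\sqrt3|^p$ that you correctly isolate is the paper's Claim A; it is elementary but not trivial, since both sides tend to $1$ as $p\to0$ and one needs the Gamma-function estimate of Lemma \ref{lm:gamma-all-p}. In short, the proposal has the right skeleton but supplies neither the quantitative estimates for the direct route (which, as stated, fail for small $p$) nor the sign-change analysis required to make the monotonicity route rigorous.
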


This is proved in Section \ref{sec:intineq}. Consequently, \eqref{eq:cor} holds provided that $a_j^2 \leq \frac{1}{2}$ for each $j$. To remove this restriction, we employ an inductive argument of Nazarov and Podkorytov from \cite{NP} developed for random signs and adapted to the uniform distribution in \cite{CKT}. This works for $0 < p < 0.69$ and the proof of Theorem \ref{thm:cp} is complete. This is done in Section \ref{sec:ind}.

\section{Auxiliary lemmas}\label{sec:lemmas}

To show Theorems \ref{thm:s>1} and \ref{thm:s>2} and carry out the inductive argument, we first prove some technical lemmas.

\subsection{Lemmas concerning the sinc function}

The zeroth spherical Bessel function (of the first kind) $j_0(x) = \frac{\sin x}{x} = \text{sinc}(x)$ is sometimes referred to as the sinc function. As the characteristic function of a uniform random variable, it plays a major role in our approach. We shall need several elementary estimates.

\begin{lemma}\label{lm:f<g}
For $0 < t < \pi$, we have $\frac{\sin t}{t} < e^{-t^2/6}$.
\end{lemma}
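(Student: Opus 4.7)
The plan is to use the Weierstrass product expansion of the sine function, which converts the multiplicative inequality into an additive one via logarithms. Specifically, I would invoke the classical identity
\[
\frac{\sin t}{t} = \prod_{n=1}^{\infty}\left(1 - \frac{t^{2}}{n^{2}\pi^{2}}\right),
\]
valid for all $t \in \R$. For $0 < t < \pi$, every factor lies strictly in $(0,1)$, so taking logarithms is harmless and yields
\[
\log\frac{\sin t}{t} = \sum_{n=1}^{\infty}\log\left(1 - \frac{t^{2}}{n^{2}\pi^{2}}\right).
\]

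Next I would apply the elementary strict inequality $\log(1-x) < -x$ for $x \in (0,1)$ (which holds termwise since $0 < t^{2}/(n^{2}\pi^{2}) < 1$ for every $n$ when $0 < t < \pi$). This gives
\[
\log\frac{\sin t}{t} < -\sum_{n=1}^{\infty}\frac{t^{2}}{n^{2}\pi^{2}} = -\frac{t^{2}}{\pi^{2}}\sum_{n=1}^{\infty}\frac{1}{n^{2}} = -\frac{t^{2}}{\pi^{2}}\cdot\frac{\pi^{2}}{6} = -\frac{t^{2}}{6},
\]
where the Basel sum $\sum 1/n^{2}=\pi^{2}/6$ is exactly what produces the constant $1/6$ that appears on the right-hand side of the claimed bound. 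Exponentiating recovers $\sin t / t < e^{-t^{2}/6}$.

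There is really no obstacle here: both the Weierstrass product and the Basel identity are classical, and the termwise bound $\log(1-x) < -x$ is immediate from the Taylor series. The only cosmetic care is to note that strict inequality already holds at the level of a single term (say $n=1$), so monotone convergence of the partial sums guarantees strict inequality in the limit. No case analysis or calculus of $t \mapsto \log(\sin t/t) + t^{2}/6$ is needed, which would otherwise be the natural alternative approach.
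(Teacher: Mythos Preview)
Your proof is correct and follows essentially the same approach as the paper: both use the Weierstrass product $\frac{\sin t}{t}=\prod_{n\geq 1}(1-t^2/(n^2\pi^2))$, the elementary bound $1-x\leq e^{-x}$ (equivalently $\log(1-x)<-x$), and the Basel identity $\sum 1/n^2=\pi^2/6$.
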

\begin{proof}
This follows from the product formula, $\frac{\sin t}{t} = \prod_{n=1}^\infty \left(1 - \frac{t^2}{n^2\pi^2}\right)$. Since each term is positive for $0 < t < \pi$, the lemma follows by applying $1 + x \leq e^x$ and $\sum_{n=1}^\infty \frac{1}{n^2} = \frac{\pi^2}{6}$.
\end{proof}

\begin{lemma}\label{lm:der}
$\sup_{t \in \R}\left|\cos t - \frac{\sin t}{t}\right| < \frac{11}{10}$.
\end{lemma}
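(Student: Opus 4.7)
Plan: Let $f(t) = \cos t - \sin t/t$ with $f(0) = 0$; by evenness I may restrict to $t \geq 0$. The backbone is the identity
\[
\bigl(\cos t - \tfrac{\sin t}{t}\bigr) + i\bigl(\sin t + \tfrac{\cos t}{t}\bigr) = e^{it}\bigl(1 + \tfrac{i}{t}\bigr),
\]
which yields the Cauchy--Schwarz-type bound $|f(t)| \leq \sqrt{1 + 1/t^2}$. This already resolves two easy ranges: for $t \geq \pi$, $\sqrt{1 + 1/\pi^2} < 11/10$ since $\pi^2 > 100/21$; and for $0 \leq t \leq \pi/2$, both $\cos t$ and $\sin t/t$ lie in $[0,1]$, so $|f(t)| \leq 1 < 11/10$ trivially.

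The principal obstacle is the intermediate range $[\pi/2, \pi]$, where the bound $\sqrt{1 + 1/t^2}$ is too loose near $\pi/2$. On this interval $|f(t)| = -\cos t + \sin t/t$, and $|f|'(t)$ has the same sign as $\Phi(t) := (t^2 - 1)\sin t + t\cos t$. I would check $\Phi(\pi/2) = \pi^2/4 - 1 > 0$ and $\Phi(\pi) = -\pi < 0$; since $\Phi'(t) = t(\sin t + t\cos t)$ and the function $\sin t + t\cos t$ is decreasing on $[\pi/2, \pi]$ (its derivative $2\cos t - t\sin t$ is non-positive there), $\Phi$ is first increasing then decreasing, with a unique zero $t^* \in (\pi/2, \pi)$ that is the unique maximizer of $|f|$ on the interval. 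Combining the critical equation $(t^{*2} - 1)\sin t^* = -t^*\cos t^*$ with $\sin^2 + \cos^2 = 1$ then yields the closed form
\[
|f(t^*)|^2 = \frac{(t^*)^4}{(t^*)^4 - (t^*)^2 + 1}.
\]

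Since the map $a \mapsto a^2/(a^2 - a + 1)$ has derivative $a(2-a)/(a^2 - a + 1)^2$ and is decreasing on $[2, \infty)$, it suffices to show $(t^*)^2 > 121/25$: the value at $a = 121/25$ is $14641/12241$, which is strictly less than $121/100$ (equivalently $100 \cdot 14641 = 1{,}464{,}100 < 1{,}481{,}161 = 121 \cdot 12241$). The bound $t^* > 11/5$ would in turn follow from $\Phi(11/5) > 0$ together with the fact that the (unique interior) maximizer of $\Phi$ lies left of $11/5$; both reduce to elementary trigonometric comparisons. Setting $\alpha = \pi - 11/5 \in (0, \pi/2)$, the estimates $\pi > 44/15$ and $\pi < 33/10$ give $\pi/4 < \alpha < \pi/3$, whence $1 < \tan\alpha < \sqrt{3}$. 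The inequality $\tan\alpha < \sqrt 3 < 11/5$ rewrites as $\sin(11/5) + (11/5)\cos(11/5) = \sin\alpha - (11/5)\cos\alpha < 0$ (placing the maximizer of $\Phi$ left of $11/5$), while $\tan\alpha > 1 > 55/96$ gives $\Phi(11/5) = (96/25)\sin\alpha - (11/5)\cos\alpha > 0$. The hard part of the argument is precisely this analysis on $[\pi/2, \pi]$, where the identity alone is insufficient and the maximizer must be studied directly.
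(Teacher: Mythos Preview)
Your proof is correct. Both you and the paper reduce to $t>0$ by evenness, invoke the bound $|f(t)|\le\sqrt{1+1/t^2}$ (you via the identity $e^{it}(1+i/t)$, the paper via Cauchy--Schwarz), and dispose of $(0,\pi/2)$ by the trivial observation that both terms lie in $[0,1]$. The approaches diverge on the remaining range. The paper uses the $\sqrt{1+1/t^2}$ bound already for $t\ge 10/\sqrt{21}\approx 2.18$, leaving only the short interval $(\pi/2,10/\sqrt{21})$; there it substitutes $t=\pi/2+x$, bounds $\frac{\cos x}{x+\pi/2}+\sin x<\frac{1}{x+\pi/2}+x$, notes this is increasing, and evaluates at the right endpoint to get $<1.07$. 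You instead keep the whole of $[\pi/2,\pi]$, locate the unique critical point $t^*$ of $|f|$, and derive the exact relation $|f(t^*)|^2=(t^*)^4/((t^*)^4-(t^*)^2+1)$, which together with $t^*>11/5$ gives the conclusion. Your route is longer but yields a closed-form expression for the maximum in terms of $t^*$; the paper's is a quicker numerical pinch. One minor remark: once you have established that $\Phi$ is positive on $[\pi/2,t^*)$ and negative on $(t^*,\pi]$, the single inequality $\Phi(11/5)>0$ already forces $t^*>11/5$; the auxiliary check that the maximizer of $\Phi$ lies to the left of $11/5$ is correct but redundant.
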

\begin{proof}
Since both $\cos t$ and $\frac{\sin t}{t}$ are even, it suffices to consider positive $t$. By the Cauchy-Schwarz inequality, we have $\left|\cos t - \frac{\sin t}{t}\right| \leq \sqrt{1+\frac{1}{t^2}}$, so it suffices to consider $t < \frac{10}{\sqrt{21}}$. On $(0, \frac{\pi}{2})$, we have $\left|\cos t - \frac{\sin t}{t}\right| = \frac{\sin t}{t} - \cos t < 1 +  0 = 1$, so it remains to consider $\frac{\pi}{2} < t < \frac{10}{\sqrt{21}}$. Letting $t = \frac{\pi}{2} + x$, we have for $0 < x < \frac{10}{\sqrt{21}} - \frac{\pi}{2}$,
\[
\left|\cos t - \frac{\sin t}{t}\right| = \frac{\sin t}{t} - \cos t = \frac{\cos x}{x+\pi/2} + \sin x < \frac{1}{x+\pi/2} + x.
\]
Examining the derivative, the right hand side is clearly increasing, so it is upper bounded by its value at $x =  \frac{10}{\sqrt{21}} - \frac{\pi}{2}$ which is $\frac{\sqrt{21}}{10} + \frac{10}{\sqrt{21}} - \frac{\pi}{2} < 1.07$.
\end{proof}

\begin{lemma}\label{lm:maxima}
Let $k \geq 0$ be an integer and let $y_k$ be the value of the unique local maximum of $\left|\frac{\sin t}{t}\right|$ on $(k\pi,(k+1)\pi)$. Then
\[
\frac{1}{(k+1/2)\pi} \leq y_k \leq \frac{1}{k\pi}.
\]
Moreover, $y_1 < e^{-3/2}$.
\end{lemma}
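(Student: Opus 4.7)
My strategy is to locate the critical point explicitly and then bound it. The derivative $\frac{d}{dt}\frac{\sin t}{t} = \frac{t\cos t - \sin t}{t^2}$ vanishes precisely where $\tan t = t$; since $\tan t$ increases from $0$ to $+\infty$ on $(k\pi, (k+1/2)\pi)$ and is negative on $((k+1/2)\pi, (k+1)\pi)$, for each $k \geq 1$ there is a unique critical point $t_k$, lying in $(k\pi, (k+1/2)\pi)$. At this point $\sin t_k = t_k \cos t_k$, and combining with $\sin^2 + \cos^2 = 1$ yields the closed form
\[
y_k = \left|\frac{\sin t_k}{t_k}\right| = |\cos t_k| = \frac{1}{\sqrt{1 + t_k^2}}.
\]
The upper bound $y_k \leq 1/(k\pi)$ then drops out from $t_k > k\pi$.

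For the lower bound I would set $\delta_k = (k+1/2)\pi - t_k \in (0, \pi/2)$. A direct calculation gives $|\cos t_k| = \sin \delta_k$, so $y_k = \sin \delta_k$, and the critical-point equation becomes $\cot \delta_k = (k+1/2)\pi - \delta_k$. Using $\sin \delta_k = \cos \delta_k / \cot \delta_k$, the target $y_k \geq 1/((k+1/2)\pi)$ is equivalent to $(k+1/2)\pi (1 - \cos \delta_k) \leq \delta_k$, which by $1 - \cos \delta_k \leq \delta_k^2/2$ reduces to $\delta_k \leq 2/((k+1/2)\pi)$. This last bound I would deduce from $\cot \delta < 1/\delta$ (equivalently $\tan \delta > \delta$) applied to $\cot \delta_k = (k+1/2)\pi - \delta_k$, which yields the quadratic inequality $\delta_k^2 - (k+1/2)\pi \delta_k + 1 > 0$; the smaller root equals $\frac{2}{(k+1/2)\pi + \sqrt{((k+1/2)\pi)^2 - 4}}$ and is $\leq 2/((k+1/2)\pi)$ whenever $(k+1/2)\pi \geq 2$, i.e.\ for all $k \geq 1$. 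The case $k = 0$ is degenerate: $\sin t / t$ is strictly decreasing on $(0, \pi)$ with supremum $1$, and interpreting $y_0 := 1$ the bounds $2/\pi \leq 1 \leq \infty$ hold trivially.

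For the final claim $y_1 < e^{-3/2}$, I would reuse $y_1 = \sin \delta_1 < \delta_1 < \frac{2}{3\pi/2 + \sqrt{9\pi^2/4 - 4}}$ from the preceding estimate, and then verify the numerical inequality $3\pi/2 + \sqrt{9\pi^2/4 - 4} \geq 2 e^{3/2}$. Both sides are approximately $8.97$, so a rigorous check using crude bounds such as $\pi > 3.14159$ and $e < 2.71829$ suffices; this is the sole genuinely numerical step. The main obstacle is precisely this tightness: the general bound $y_k \leq 1/(k\pi)$ (giving $y_1 \leq 1/\pi \approx 0.318$) is far too weak against $e^{-3/2} \approx 0.223$, so one must commit to the sharper quadratic-root estimate of $\delta_1$ and carry out the numerical comparison with some care.
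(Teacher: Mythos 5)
Your proof is correct, but it takes a substantially more computational route than the paper's. For the two-sided bound the paper's argument is essentially one line: since $y_k$ is the maximum of $|\sin t|/t$ over $(k\pi,(k+1)\pi)$ and $t=(k+1/2)\pi$ lies in that interval, $y_k \geq 1/((k+1/2)\pi)$ immediately; the upper bound is just $|\sin t|\leq 1$, $t>k\pi$. You instead solve the critical-point equation $\tan t = t$, obtain the closed form $y_k = 1/\sqrt{1+t_k^2}$, and derive the quantitative localization $\delta_k = (k+1/2)\pi - t_k \leq 2/((k+1/2)\pi)$; this is more work for the same two-sided bound, but it buys you sharper information that you then reuse for $y_1 < e^{-3/2}$. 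There the two proofs diverge genuinely: the paper rewrites the claim as $\sin t < e^{-3/2}(t+\pi)$ on $(0,\pi/2)$ and compares the tangent line to $\sin$ at $t=1.3$ with the line on the right, while you bound $y_1 = \sin\delta_1 < \delta_1$ by the smaller root of the quadratic and reduce to the numerical inequality $3\pi/2+\sqrt{9\pi^2/4-4} > 2e^{3/2}$ ($8.9793\ldots$ vs.\ $8.9634\ldots$); equivalently $9\pi^2/4 - 4 > (2e^{3/2}-3\pi/2)^2$, i.e.\ $18.207\ldots > 18.071\ldots$, which is checkable with modest precision, comparable in spirit to the paper's own numerical step. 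Two small points to make explicit in a final write-up: (a) in the quadratic step you should note that $\delta_k < \pi/2$ is below the \emph{larger} root $\tfrac{1}{2}\bigl((k+1/2)\pi + \sqrt{((k+1/2)\pi)^2-4}\bigr) > 3\pi/4$, so the inequality $\delta_k^2 - (k+1/2)\pi\,\delta_k + 1 > 0$ indeed forces $\delta_k$ below the smaller root rather than above the larger one; (b) your convention $y_0 = 1$ for the degenerate case $k=0$ matches the paper's later usage, so that is fine.
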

\begin{proof}
The lower bound follows from taking $t = (k+1/2)\pi$, whereas the upper bound follows from $|\sin t| \leq 1$ and $t > k\pi$. The bound on $y_1$ is equivalent to $\sin t < e^{-3/2}(t+\pi)$, $0 < t < \pi/2$. To show this in turn, it suffices to upper bound $\sin t$ by its tangent at, e.g., $t = 1.3$.
\end{proof}

\begin{lemma}\label{lm:t0}
For $y \in (0,\frac{1}{30\pi})$, let $t = t_0$ be the unique solution to $\frac{\sin t}{t} = y$ on $(0,\pi)$. Then $t_0 > 0.98\pi$. Let $t = t_1$ be the larger of the two solutions to $\frac{|\sin t|}{t} = y$ on $(\pi,2\pi)$. Then $t_1 > 1.97\pi$.
\end{lemma}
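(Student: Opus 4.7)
Both claims reduce to a single sort of computation: verifying that $|\sin t|/t$ strictly exceeds $\frac{1}{30\pi}$ at the threshold value of $t$ in question. The tools needed are the strict monotonicity of the sinc function on $(0,\pi)$, the qualitative shape of $|\sin t|/t$ on $(\pi,2\pi)$ (together with the bound from Lemma \ref{lm:maxima}), and the elementary cubic bound $\sin x \geq x - x^3/6$ for $x\geq 0$.

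For the first claim, I would first record that $\sin t/t$ is strictly decreasing on $(0,\pi)$: the numerator $g(t)=t\cos t-\sin t$ of its derivative satisfies $g(0)=0$ and $g'(t)=-t\sin t<0$ on $(0,\pi)$. Hence $t_0>0.98\pi$ will follow once one shows $\frac{\sin(0.98\pi)}{0.98\pi}>\frac{1}{30\pi}$. Writing $\sin(0.98\pi)=\sin(0.02\pi)$ and applying the Taylor bound gives $\sin(0.02\pi) \geq 0.02\pi(1-(0.02\pi)^2/6) > 0.02\pi\cdot 0.999$, so the inequality reduces to $\frac{0.02\cdot 0.999}{0.98}>\frac{1}{30\pi}$; the left side exceeds $0.02$ while the right side is under $0.011$, so the margin is enormous.

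For the second claim, I would first describe the shape of $f(t)=|\sin t|/t$ on $(\pi,2\pi)$: it is continuous, vanishes at both endpoints, and attains a unique interior maximum $y_1$ (which by Lemma \ref{lm:maxima} satisfies $y_1 \geq \frac{1}{(3/2)\pi}=\frac{2}{3\pi}>\frac{1}{30\pi}$). Since $y<\frac{1}{30\pi}<y_1$, the equation $f(t)=y$ has exactly two solutions in $(\pi,2\pi)$, of which $t_1$ is the larger. It therefore suffices to show $f(1.97\pi)>y$: combined with $f(2\pi)=0<y$, the intermediate value theorem produces a root in $(1.97\pi,2\pi)$, and $t_1$, being the largest root, must exceed $1.97\pi$. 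The bound $f(1.97\pi)>\frac{1}{30\pi}$ is equivalent to $30\sin(0.03\pi)>1.97$, which follows from $\sin(0.03\pi)\geq 0.03\pi(1-(0.03\pi)^2/6)>0.094$ (yielding $30\cdot 0.094=2.82>1.97$). No step is expected to pose a real obstacle; the numerics are comfortably loose and the cubic Taylor bound is more than accurate enough at the small arguments $0.02\pi$ and $0.03\pi$.
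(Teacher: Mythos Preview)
Your proposal is correct and follows essentially the same approach as the paper: reduce both claims to the single numerical check $\frac{|\sin t|}{t}>\frac{1}{30\pi}$ at $t=0.98\pi$ and $t=1.97\pi$, then invoke the monotonicity/shape of the sinc function. Your version is simply more explicit---supplying the Taylor bound for the numerics and the IVT argument for locating $t_1$---where the paper leaves these verifications to the reader.
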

\begin{proof}
Note that
$\frac{\sin t_0}{t_0} = y < \frac{1}{30\pi} < \frac{\sin (0.98\pi)}{0.98\pi}$.
Since $\frac{\sin t}{t}$ is decreasing on $(0,\pi)$, it follows that $t_0>0.98\pi$. Similarly, we check that $\frac{|\sin(1.97\pi)|}{1.97\pi} > \frac{1}{30\pi}$ to justify the claim about $t_1$.
\end{proof}

\begin{lemma}\label{lm:sin}
For $0 < x < \pi$,
\[
\frac{1}{\sin^2 x} > \frac{1}{x^2}  + \frac{1}{(\pi-x)^2}.
\]
\end{lemma}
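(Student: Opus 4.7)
The plan is to exploit the classical partial-fraction (Mittag--Leffler) expansion of $\csc^2$: starting from the well-known identity
\[
\cot x = \frac{1}{x} + 2x\sum_{n=1}^\infty \frac{1}{x^2 - n^2\pi^2}, \qquad x \notin \pi\Z,
\]
differentiating term-by-term (justified by local uniform convergence away from $\pi\Z$) yields
\[
\frac{1}{\sin^2 x} = \sum_{n \in \Z} \frac{1}{(x - n\pi)^2}.
\]
This is the only nontrivial input; everything else is bookkeeping.

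Given this, the proof is immediate. Isolating the $n = 0$ and $n = 1$ terms,
\[
\frac{1}{\sin^2 x} = \frac{1}{x^2} + \frac{1}{(x - \pi)^2} + R(x), \qquad R(x) = \sum_{n \in \Z \setminus\{0,1\}} \frac{1}{(x - n\pi)^2},
\]
and since $(x-\pi)^2 = (\pi - x)^2$, it remains only to observe that on $(0,\pi)$ every summand in $R(x)$ is strictly positive (e.g.\ the $n = -1$ term $\frac{1}{(x+\pi)^2}$ alone is $>0$), giving the strict inequality.

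The only place one has to be a little careful is quoting the Mittag--Leffler expansion, which the paper does not introduce. If one prefers to keep the section self-contained, an alternative is to substitute $y = x - \pi/2$ and reduce the inequality to
\[
(\pi^2/4 - y^2)^2 > \cos^2 y \cdot (\pi^2/2 + 2y^2), \qquad |y| < \pi/2,
\]
which is even in $y$, with equality at $y = \pm\pi/2$ and strict excess at $y = 0$ (there the difference equals $\pi^2(\pi^2 - 8)/16 > 0$); checking that it has no interior zero via one derivative in $y^2$ is elementary but tedious. The partial-fraction route is much shorter, and that is the one I would take, so the main obstacle is merely a clean citation/justification of the $\csc^2$ expansion rather than any genuine analytic difficulty.
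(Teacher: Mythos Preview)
Your partial-fraction argument is correct: the expansion $\csc^2 x = \sum_{n\in\Z}(x-n\pi)^{-2}$ is standard, and isolating the $n=0,1$ terms immediately gives the strict inequality with a positive remainder. The paper takes a different route: it uses the infinite product $\frac{\sin x}{x}=\prod_{k\geq 1}\cos(x/2^k)$ (itself a consequence of the double-angle formula) to get $\big(\frac{\sin x}{x}\big)^2<\cos^2(x/2)$, then writes
\[
\sin^2 x\Big(\frac{1}{x^2}+\frac{1}{(\pi-x)^2}\Big)=\Big(\frac{\sin x}{x}\Big)^2+\Big(\frac{\sin(\pi-x)}{\pi-x}\Big)^2<\cos^2(x/2)+\sin^2(x/2)=1.
\]
Your approach is arguably slicker once the Mittag--Leffler identity is granted, and it even yields an explicit lower bound on the gap (e.g.\ the $n=-1$ term $(x+\pi)^{-2}$). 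The paper's approach trades this for a more elementary input: the cosine product is derived in one line from $\sin(2x)=2\sin x\cos x$, so no external citation is needed and the section stays self-contained. Either works fine here.
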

\begin{proof}
It is well known (and follows from $\sin(2x) = 2\sin x \cos x$) that 
\[
\frac{\sin x}{x} = \prod_{k=1}^\infty \cos(x/2^k).
\]
In particular, for $0 < x < \pi$, we have $\left(\frac{\sin x}{x}\right)^2 < \cos^2(x/2)$, hence
\begin{align*}
\sin^2 x\left(\frac{1}{x^2}  + \frac{1}{(\pi-x)^2}\right) &= \left(\frac{\sin x}{x}\right)^2 + \left(\frac{\sin (\pi-x)}{\pi - x}\right)^2 \\
&< \cos^2(x/2) + \cos^2(\pi/2-x/2) = 1.
\end{align*}
\end{proof}

\begin{lemma}\label{lm:slopes}
Let $k \geq 1$ be an integer. On $((k-1)\pi,k\pi)$, we have

(i) the function $\frac{|\sin t|}{t(t-(k-1)\pi)}$ is nonincreasing,

(ii) the function $\frac{|\sin t|}{t(k\pi - t)}$ is unimodal (first increases and then decreases).
\end{lemma}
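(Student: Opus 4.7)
The plan is to unify both parts by a substitution that shifts the interval to $(0,\pi)$. Set $u = t - (k-1)\pi$ and $a = (k-1)\pi \geq 0$, so that $u \in (0,\pi)$, $|\sin t| = \sin u$, $t = u+a$, and $k\pi - t = \pi - u$. Then part (i) becomes: $f(u) = \frac{\sin u}{u(u+a)}$ is nonincreasing on $(0,\pi)$, and part (ii) becomes: $g(u) = \frac{\sin u}{(u+a)(\pi-u)}$ is unimodal on $(0,\pi)$. Both claims will be established by examining the sign of the logarithmic derivative.

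For (i), I compute
\[
(\log f)'(u) = \cot u - \frac{1}{u} - \frac{1}{u+a}.
\]
The classical partial-fraction identity $\cot u = \tfrac{1}{u} - \sum_{n\geq 1} \tfrac{2u}{n^2\pi^2 - u^2}$, valid for $u \in (0,\pi)$, yields $\cot u < \tfrac{1}{u}$ on this interval. Hence $(\log f)'(u) < -\frac{1}{u+a} < 0$, so $f$ is strictly decreasing. (One can also verify this identity-free, by noting $\tan u > u$ on $(0,\pi/2)$ and that $\cot u < 0 < 1/u$ on $[\pi/2,\pi)$.)

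For (ii), I compute
\[
(\log g)'(u) = \cot u - \frac{1}{u+a} + \frac{1}{\pi - u}.
\]
As $u \to 0^+$, $\cot u \to +\infty$, so $(\log g)'(u) \to +\infty$. As $u \to \pi^-$, the asymptotic $\cot u = -\tfrac{1}{\pi-u} + O(\pi-u)$ gives $(\log g)'(u) \to -\tfrac{1}{\pi+a} < 0$. To conclude that $(\log g)'$ has exactly one zero on $(0,\pi)$ (and hence that $g$ increases then decreases), it suffices to show $(\log g)'$ is strictly decreasing. Differentiating once more,
\[
(\log g)''(u) = -\csc^2 u + \frac{1}{(u+a)^2} + \frac{1}{(\pi - u)^2}.
\]
Applying Lemma \ref{lm:sin}, $\csc^2 u > \tfrac{1}{u^2} + \tfrac{1}{(\pi-u)^2}$, hence
\[
(\log g)''(u) < \frac{1}{(u+a)^2} - \frac{1}{u^2} \leq 0,
\]
the last inequality because $a \geq 0$. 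Thus $(\log g)'$ is strictly decreasing, crosses zero exactly once, and $g$ is unimodal.

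The only nontrivial ingredient is Lemma \ref{lm:sin}, already proved; the rest is bookkeeping. I would expect part (i) to be essentially immediate after the substitution, while part (ii) requires the slightly sharper cosecant estimate, which is why Lemma \ref{lm:sin} was set up in advance.
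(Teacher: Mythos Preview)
Your proof is correct and follows essentially the same approach as the paper: both compute the logarithmic derivative, use $\cot u < \tfrac{1}{u}$ on $(0,\pi)$ for part (i), and for part (ii) show the logarithmic derivative is strictly decreasing via Lemma~\ref{lm:sin} after bounding $\tfrac{1}{(u+a)^2} \leq \tfrac{1}{u^2}$. The only cosmetic difference is that you perform the shift $u = t - (k-1)\pi$ at the outset, whereas the paper works in the original variable and substitutes only when invoking Lemma~\ref{lm:sin}.
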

\begin{proof}
(i) The derivative equals
\[
\frac{|\sin t|}{t(t-(k-1)\pi)}\left(\cot(t) - \frac{1}{t} - \frac{1}{t-(k-1)\pi}\right)
\]
which is negative on $((k-1)\pi,k\pi)$ because on this interval, $\cot(t) < \frac{1}{t-(k-1)\pi}$ (as, by periodicity, being equivalent to $\cot(t) < \frac{1}{t}$ on $(0,\pi)$, which is clear -- recall that $\tan(x) > x$ on $(0,\frac{\pi}{2})$).

(ii) Here, the derivative reads
\[
\frac{|\sin t|}{t(k\pi - t)}h(t), \qquad h(t) = \cot(t) - \frac{1}{t} + \frac{1}{k\pi - t}.
\]
We shall argue that $h(t)$ is decreasing on $((k-1)\pi,k\pi)$. This suffices, since $h(t) > 0$ for $t$ near $(k-1)\pi$ and $h(t) < 0$ for $t$ near $k\pi$. Setting $t = (k-1)\pi+x$, we have
\begin{align*}
h'(t) &= -\frac{1}{\sin^2 t} + \frac{1}{t^2} + \frac{1}{(k\pi-t)^2} \\
&\leq -\frac{1}{\sin^2 x} + \frac{1}{x^2} + \frac{1}{(\pi-x)^2} < 0,
\end{align*}
by Lemma \ref{lm:sin}.
\end{proof}

\subsection{Lemmas concerning sums of \emph{p}-th powers}

Our computations require several technical bounds on various expressions involving sums of $p$-th powers.

\begin{lemma}\label{lm:der-p}
Let $0 < p < 1$ and let $1 \leq m \leq 29$ be an integer. Set
\[
u_m(p) = B_m\left(b_{0,m}^p + 2\sum_{k=1}^m b_{k,m}^p\right)
\]
with
\[
B_m = \frac{20\log\Big(\pi(m+3/2)\Big)}{11\pi(m+3/2)}, \qquad b_{k,m} = \frac{1}{k+1}\sqrt{\frac{6}{\pi^2}\log\Big(\pi(m+3/2)\Big)}.
\]
Then, 
\[
u_m(p) > 1.
\]
\end{lemma}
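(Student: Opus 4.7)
The plan is to exploit convexity of $u_m$ in $p$ to reduce the infinitely many inequalities $u_m(p) > 1$, $p \in (0,1)$, to a single endpoint check at $p = 1$. Each summand $b_{k,m}^p = e^{p \log b_{k,m}}$ is a convex function of $p$, so $u_m$ is a positive linear combination of convex functions and is therefore convex on $\mathbb{R}$. Consequently $u_m'$ is non-decreasing, so once one shows $u_m'(1) < 0$, it follows that $u_m'(p) \leq u_m'(1) < 0$ for all $p \leq 1$, i.e.\ $u_m$ is strictly decreasing on $[0,1]$ and $\inf_{p \in (0,1)} u_m(p) = u_m(1)$.

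Writing $L_m = \log(\pi(m+3/2))$ and $c_m = \sqrt{6L_m/\pi^2}$, one has $b_{0,m} = c_m$ and $b_{k,m} = c_m/(k+1)$ for $k \geq 1$, and a direct differentiation gives
\[
\frac{u_m'(1)}{B_m c_m} = (2H_{m+1}-1)\log c_m - 2\sum_{j=2}^{m+1}\frac{\log j}{j},
\]
where $H_{m+1} = \sum_{j=1}^{m+1} 1/j$. This is negative throughout the relevant range because $\log c_m = O(\log\log m)$ while the sum grows like $\tfrac12(\log m)^2$. I would make this rigorous using the explicit identity $\log c_m = \tfrac12\log(6L_m/\pi^2)$ together with the lower bound $\sum_{j=2}^{m+1}j^{-1}\log j \geq \tfrac{\log 2}{2} + \tfrac12[(\log(m+2))^2 - (\log 3)^2]$ obtained from integral comparison on $[3,\infty)$, where $x^{-1}\log x$ is decreasing. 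For the few smallest $m$ where this crude comparison may not suffice, the inequality is checked directly.

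Once monotonicity is in place, the infimum is the closed-form quantity
\[
u_m(1) = B_m c_m(2H_{m+1}-1) = \frac{20\sqrt{6}\,L_m^{3/2}(2H_{m+1}-1)}{11\pi^2(m+3/2)},
\]
which must be shown to exceed $1$ for each $m \in \{1,\dots,29\}$. The map $m\mapsto u_m(1)$ is not monotone: it rises from $u_1(1)\approx 1.067$, peaks near $m = 5$, and decays back to $u_{29}(1)\approx 1.008$. I would therefore present the verification as a direct tabulation of the 29 values, rather than trying to squeeze it out of a unified analytic estimate.

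The main obstacle is that the cutoff $m \leq 29$ is essentially sharp, since $u_{30}(1) < 1$. The bound at $m = 29$ must therefore be checked tightly, which essentially forces the finite numerical verification at the end; the convexity argument is needed precisely because no clean uniform-in-$p$ inequality appears to have enough slack to handle the boundary case.
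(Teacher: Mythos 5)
Your proposal is correct and follows essentially the same route as the paper: convexity of $p\mapsto u_m(p)$, the endpoint check $u_m'(1)<0$ to get monotonicity on $(0,1)$, and a finite tabulation of the $29$ values $u_m(1)$ (the paper simply tabulates both $-u_m'(1)$ and $u_m(1)$ rather than deriving the closed form via $H_{m+1}$). Your explicit formula for $u_m'(1)$ and the integral-comparison bound are a harmless refinement, but the argument is the same.
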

\begin{proof}
Fix $m$. Plainly, $u_m(p)$ is a convex function (as a sum of convex functions). Thus, $u_m'(p) < u_m'(1)$ for $0 < p < 1$. We have,
\[
u_m'(1) = B_m\left(b_{0,m}\log b_{0,m} + 2 \sum_{k=1}^m b_{k,m}\log b_{k,m}\right)
\]
and Table \ref{tab:u} shows that each $u_m'(1)$ is negative, so each $u_m$ is decreasing. Therefore, $u_m(p) > u_m(1)$, for $0 < p < 1$ and Table \ref{tab:u} shows that each $u_m(1)$ is greater than $1$. This finishes the proof.
\end{proof}

\begin{table}[!hb]
\begin{center}
\caption{Lower bounds on the values of  $-u_m'(1)$ and $u_m(1)$.}
\label{tab:u}
\begin{tabular}{r|ccccccccccccccc}
$m$ & 1 & 2 & 4 & 4 & 5 & 6 & 7 & 8 & 9 & 10 \\\hline
$-u_m'(1)$ & $0.24$ & $0.44$ & $0.58$ & $0.70$ & $0.79$ & $0.86$ & $0.91$ & $0.96$ & $0.99$ & $1.02$  \\
$u_m(1)$ & $1.06$ & $1.27$ & $1.36$ & $1.40$ & $1.41$ & $1.41$ & $1.40$ & $1.38$ & $1.36$ & $1.34$
\end{tabular}

\bigskip

\bigskip

\begin{tabular}{r|ccccccccccccccc}
$m$ & 11 & 12 & 13 & 14 & 15 & 16 & 17 & 18 & 19 & 20 \\\hline
$-u_m'(1)$ & $1.05$ & $1.07$ & $1.08$ & $1.10$ & $1.11$ & $1.12$ & $1.13$ & $1.13$ & $1.14$ & $1.14$ \\
$u_m(1)$ & $1.32$ & $1.29$ & $1.27$ & $1.25$ & $1.23$ & $1.21$ & $1.19$ & $1.17$ & $1.15$ & $1.14$ 
\end{tabular}

\bigskip

\bigskip

\begin{tabular}{r|ccccccccccccccc}
$m$ & 21 & 22 & 23 & 24 & 25 & 26 & 27 & 28 & 29  \\\hline
$-u_m'(1)$ & $1.14$ &$1.14$ & $1.15$ & $1.15$ & $1.15$ & $1.15$ & $1.15$ & $1.15$ & $1.14$ &\\
$u_m(1)$ & $1.12$ & $1.10$ & $1.09$ & $1.07$ & $1.06$ & $1.04$ & $1.03$ & $1.02$ & $1.00$
\end{tabular}
\end{center}
\end{table}

\begin{lemma}\label{lm:sum-p}
For $0 < p < 1$, let
\begin{align*}
\alpha_p &= 2\pi^{-p+1}\left(3-\frac{1}{1-p}+\frac{3}{2p}\right),  \qquad
\beta_p = \frac{2}{1-p} + \frac{1.05^p}{p},  \qquad \gamma_p = \frac{3\pi}{p}, \\
\delta_p &= \frac{1}{p}\left(\frac{30\pi}{6\log(30\pi)}\right)^{p/2}
\end{align*}
and 
\[
h_p(y) = \delta_p y^{\frac{p}{2}-1} + \gamma_p y^p - \beta_py^{p-1} -\alpha_p.
\]
Then $h_p(y) > 0$ for every $0 < y < \frac{1}{30\pi}$.
\end{lemma}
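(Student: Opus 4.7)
The strategy is monotonicity plus an endpoint check. The exponents in $h_p$ satisfy $p/2 - 1 < p-1 < 0 < p$, so the term $\delta_p y^{p/2-1}$ dominates as $y \to 0^+$ and $h_p(y) \to +\infty$ at the left endpoint. It therefore suffices to prove that $h_p$ is non-increasing on $(0, 1/(30\pi)]$ and that $h_p(1/(30\pi)) > 0$.

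For monotonicity, differentiate and multiply by the positive quantity $y^{2-p}$ to obtain the sign-equivalent function
\[
G(y) = -\delta_p(1-p/2)\,y^{-p/2} + \gamma_p p\, y + \beta_p(1-p).
\]
Since $G'(y) = \delta_p(1-p/2)(p/2)\,y^{-p/2-1} + \gamma_p p > 0$, $G$ is strictly increasing on $(0,\infty)$ with $G(0^+) = -\infty$, so it has a unique zero $y_\ast$ and $h_p$ is strictly decreasing on $(0,y_\ast)$. To guarantee $y_\ast \geq 1/(30\pi)$, it suffices to check $G(1/(30\pi)) \leq 0$. Direct substitution gives
\[
G\!\left(\tfrac{1}{30\pi}\right) = -\frac{1-p/2}{p}\cdot\frac{(30\pi)^p}{(6\log 30\pi)^{p/2}} + \frac{21}{10} + \frac{(1-p)1.05^p}{p},
\]
and multiplying through by $p$ produces a smooth function of $p$ that vanishes at $p = 0$ and is positive at $p = 1$ with positive derivative at $0$; I would verify the needed inequality for $p \in (0,1)$ by a convexity argument or by splitting into ranges $p \in (0,1/2]$ and $p \in [1/2, 1)$ and using the crude bounds $(30\pi)^p \geq 1$ and $(6\log 30\pi)^{p/2} \leq \sqrt{6\log 30\pi}$.

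For endpoint positivity, expand
\[
h_p\!\left(\tfrac{1}{30\pi}\right) = \frac{30\pi}{p(6\log 30\pi)^{p/2}} + \frac{3\pi(30\pi)^{-p}}{p} - \beta_p(30\pi)^{1-p} - \alpha_p.
\]
Multiplying by $p$ clears the $1/p$ singularities: the leading $\frac{30\pi}{p}$ contribution from the first term beats the competing $\frac{1.05^p(30\pi)^{1-p}}{p}$ contribution from $\beta_p(30\pi)^{1-p}$ and the $\frac{3\pi^{1-p}}{p}$ contribution from $\alpha_p$, because $30\pi > 1.05^p(30\pi)^{1-p} + 3\pi^{1-p}$ on $(0,1)$ (an inequality that can be verified by comparing derivatives in $p$). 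The $\frac{1}{1-p}$ terms appearing in $\alpha_p$ and $\beta_p$ enter with opposite signs in the combination $-\beta_p(30\pi)^{1-p}-\alpha_p$, and the resulting cancellation must be tracked carefully as $p \to 1^-$.

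The main obstacle will be the endpoint positivity near the two singular regimes: as $p \to 0^+$ both sides of the desired inequality blow up like $1/p$, and as $p \to 1^-$ the $1/(1-p)$ cancellations between $-\alpha_p$ and $-\beta_p(30\pi)^{1-p}$ are delicate. A likely clean route is to multiply through by $p(1-p)$ to reduce to comparing smooth bounded functions of $p$, and then verify the resulting inequality on $[0,1]$ by exhibiting its critical points or by a coarse grid together with bounds on the derivative (in the spirit of Lemma \ref{lm:der-p}).
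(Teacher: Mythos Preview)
Your overall architecture --- show $h_p$ is decreasing on $(0,1/(30\pi)]$ by analysing $y^{2-p}h_p'(y)$ and reducing to a single check at $y=1/(30\pi)$, then verify $h_p(1/(30\pi))>0$ --- is exactly the paper's strategy. What remains is two one-variable inequalities in $p$, and here your proposal has genuine gaps.

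For the monotonicity step, your ``crude bounds'' $(30\pi)^p\geq 1$ and $(6\log 30\pi)^{p/2}\leq \sqrt{6\log 30\pi}$ are far too weak: they give $(1-p/2)\frac{(30\pi)^p}{(6\log 30\pi)^{p/2}}\geq (1-p/2)/\sqrt{6\log 30\pi}\approx 0.19(1-p/2)$, which at $p=1/2$ is about $0.14$, hopelessly small against $2.1p+(1-p)1.05^p\approx 1.56$. The paper instead takes logarithms of the target inequality $1+2.15p<(1-p/2)A^p$ (with $A=30\pi/\sqrt{6\log 30\pi}$), notes equality at $p=0$, and shows the derivative $\log A-\tfrac{1}{2-p}-\tfrac{2.15}{1+2.15p}$ is concave in $p$ and positive at both endpoints.

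For the endpoint step, the specific inequality you write, $30\pi>1.05^p(30\pi)^{1-p}+3\pi^{1-p}$ on $(0,1)$, is false near $p=0$ (the two sides agree at $p=0$ only after you restore the missing $+3\pi(30\pi)^{-p}$ term and the factor $(6\log 30\pi)^{-p/2}$ on the left). More substantively, the remainder after the $1/p$ and $1/(1-p)$ cancellations is not small, and ``track carefully'' is not a proof. The paper multiplies $h_p(1/(30\pi))$ by $30^p\pi^{p-1}$ and writes the result as $L(p)-R(p)$ with both $L$ and $R$ strictly increasing and convex on $(0,1)$; it then bounds $L$ below by a tangent line and $R$ above by a chord on each of $(0,0.6)$ and $(0.6,1)$, reducing everything to comparing two explicit linear functions at endpoints. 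Your proposed ``multiply by $p(1-p)$ and grid-check with derivative bounds'' could in principle be made to work, but as written none of the actual verification is done.
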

\begin{proof}
Plainly, it suffices to show the following two claims,
\begin{align}\label{eq:hp'}
h_p'(y) &< 0, \qquad 0 < y < \frac{1}{30\pi},  \\\label{eq:hp}
h_p\left(\frac{1}{30\pi}\right) &> 0. 
\end{align}
To prove \eqref{eq:hp'}, first we find
\[
y^{2-\frac{p}{2}}h_p'(y) = -\left(1 - \frac{p}{2}\right)\delta_p + p\gamma_py^{\frac{p}{2}+1} + (1-p)\beta_p y^{\frac{p}{2}}
\]
which is clearly increasing in $y$, thus to show that it is negative, it suffices to prove that at $y = \frac{1}{30\pi}$, which in turn is equivalent to 
\[
2.1p + (1-p)1.05^p < \left(1-\frac{p}{2}\right)\left(\frac{30\pi}{\sqrt{6\log(30\pi)}}\right)^{p}.
\]
Crudely, $(1-p)1.05^p < 1.05^p < 1+0.05p$, by convexity, thus it suffices to show that
\[
1+2.15p < \left(1-\frac{p}{2}\right)A^p,
\]
where we put $A = \frac{30\pi}{\sqrt{6\log(30\pi)}}$. Equivalently, after taking the logarithm, the inequality becomes
\[
p\log A + \log \left(1-\frac{p}{2}\right) - \log(1+2.15p) > 0.
\]
Note that at $p = 0$ this becomes equality. We claim that the derivative of the left hand side is positive for $0 < p < 1$, which will finish the argument. The derivative is $\log A - \frac{1}{2-p} - \frac{2.15}{1+2.15p}$ which is clearly concave, thus it suffices to examine whether it is positive at the end-points $p=0$ and $p=1$, which respectively becomes $\log A > 2.65$ and $\log A > 1 + \frac{2.15}{3.15}$. Since $\log A = 2.89..$, both are clearly true.

It remains to show \eqref{eq:hp}, that is that the following is positive for every $0 < p < 1$,
\[
30^p\pi^{p-1}h_p\left(\frac{1}{30\pi}\right) =\underbrace{30\frac{\left(\frac{30\pi}{\sqrt{6\log(30\pi)}}\right)^p - 1.05^p}{p}}_{L(p)} - \Big(\underbrace{2\frac{30-30^p}{1-p} + 3\frac{30^p-1}{p} + 6\cdot 30^p}_{R(p)}\Big).
\]
Both $L(p)$ and $R(p)$ are strictly increasing and convex on $(0,1)$. This is clear for $L$, since its Taylor expansion at $p=0$ has positive coefficients. Similarly for the term $\frac{30^p-1}{p}$ in $R(p)$. To see that $\frac{30-30^p}{1-p}$ is strictly increasing and convex, write it as $30\int_1^{30} u^p \frac{\dd u}{u^2}$. 

\emph{Case 1: $0 < p < 0.6$.} By convexity, using a tangent line
\[
L(p) \geq L(0.24) + L'(0.24)(p-0.24) = \ell(p)
\]
and a chord
\[
R(p) \leq \frac{p}{0.6}R(0.6) + \frac{0.6-p}{0.6}R(0^+) = r(p).
\]
With hindsight, the tangent and the chord are chosen such that $\ell > r$ on $(0,0.6)$, which can be checked directly by looking at the values of these linear functions at the end-points.

\emph{Case 2: $0.6 < p < 1$.} Similarly, by convexity, using a tangent line
\[
L(p) \geq L(0.8) + L'(0.8)(p-0.8) = \tilde\ell(p)
\]
and a chord
\[
R(p) \leq \frac{1-p}{0.4}R(0.6) + \frac{p-0.6}{0.4}R(1^-) = \tilde r(p).
\]
Again, with hindsight, the tangent and the chord are chosen such that $\tilde\ell > \tilde r$ on $(0.6,1)$. This completes the proof.
\end{proof}

\subsection{Lemmas concerning the gamma function}
For the inductive part of our argument, we will later need bounds on the following function
\[
\psi(p) = \frac{1+p}{\sqrt{\pi}}\left(\frac{4}{3}\right)^{p/2}\Gamma\left(\frac{1+p}{2}\right), \qquad 0 < p < 1.
\]
Recall the Weierstrass' product formula, $\Gamma(z) = \frac{e^{-\gamma z}}{z}\prod_{n=1}^\infty \left(1+\frac{z}{n}\right)^{-1}e^{z/n}$, where $\gamma = 0.57..$ is the Euler-Mascheroni constant. Writing $\sqrt{\pi}$ as $\Gamma(\frac{1}{2})$, we obtain
\begin{equation}\label{eq:psi-prod}
\psi(p) = e^{\frac{p}{2}\left(\log(4/3)-\gamma\right)}\prod_{n=1}^\infty \left(1+\frac{p}{2n+1}\right)^{-1}e^{\frac{p}{2n}}.
\end{equation}

\begin{lemma}\label{lm:gamma-small-p}
For $0 < p < 0.69$, we have $\psi(p) < \frac{1}{2-(3/2)^{p/2}}$.
\end{lemma}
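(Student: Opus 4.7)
The plan is to take logarithms and use convexity of both sides, combined with a partition of the parameter interval, to reduce the claim to a finite numerical verification.

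Set $F(p) = \log \psi(p)$ and $G(p) = -\log\bigl(2 - (3/2)^{p/2}\bigr)$, so the desired inequality becomes $F(p) < G(p)$ for $p \in (0, 0.69)$. From $\psi(0) = \Gamma(1/2)/\sqrt{\pi} = 1$ we obtain $F(0) = G(0) = 0$, so equality holds at the left endpoint.

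First I would verify that both $F$ and $G$ are convex on $[0, 0.69]$. Differentiating the product formula \eqref{eq:psi-prod} twice yields $F''(p) = \sum_{n=1}^\infty (2n+1+p)^{-2} > 0$. For $G$, since $(3/2)^{p/2}$ is convex the function $w(p) = 2 - (3/2)^{p/2}$ is concave and positive on the range (one has $2 - (3/2)^{1/2} > 0$), hence $\log w$ is concave and $G = -\log w$ is convex. A short computation from \eqref{eq:psi-prod} also gives $F'(0) = \tfrac{1}{2}\bigl(\log \tfrac{4}{3} - \gamma\bigr) + 1 - \log 2 \approx 0.162$, strictly less than $G'(0) = \tfrac{1}{2}\log(3/2) \approx 0.203$, so $F < G$ in a right-neighborhood of $0$.

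A single derivative comparison does not extend this globally, since $G - F$ is not monotone on $(0, 0.69)$. I would therefore partition the interval as $0 = p_0 < p_1 < \cdots < p_m = 0.69$ and, on each subinterval $[p_{i-1}, p_i]$, bound $F$ from above by the chord
\[
L_i(p) = \frac{p_i - p}{p_i - p_{i-1}}\, F(p_{i-1}) + \frac{p - p_{i-1}}{p_i - p_{i-1}}\, F(p_i)
\]
(valid by convexity of $F$) and bound $G$ from below by the tangent $T_i(p) = G(c_i) + G'(c_i)(p - c_i)$ at a well-chosen point $c_i \in [p_{i-1}, p_i]$ (valid by convexity of $G$). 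Since $L_i$ and $T_i$ are affine, the inequality $L_i(p) < T_i(p)$ on $[p_{i-1}, p_i]$ reduces to checking its two endpoint values, a finite verification. Doing so on every piece yields $F < G$ throughout $(0, 0.69)$.

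The main obstacle is obtaining sufficiently precise numerical values of $F(p_i) = \log \psi(p_i)$, which reduce via $\psi(p) = \frac{1+p}{\sqrt{\pi}}(4/3)^{p/2}\Gamma\bigl(\tfrac{1+p}{2}\bigr)$ to bounds on the Gamma function at specific arguments. Such bounds can be produced by truncating the product in \eqref{eq:psi-prod} after $N$ factors and controlling the tail via the elementary estimate
\[
0 < \frac{p}{2n} - \log\!\left(1 + \tfrac{p}{2n+1}\right) < \frac{p}{2n(2n+1)} + \frac{p^2}{2(2n+1)^2}, \qquad p \in (0,1),\ n \geq 1.
\]
The inequality is tightest as $p \to 0.69^-$, where rough numerics suggest $G - F$ is only of order $10^{-3}$, so the partition must be sufficiently refined near the right endpoint; a modest number of nodes with denser spacing there should be enough. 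With those estimates in hand the rest is a mechanical calculation.
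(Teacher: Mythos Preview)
Your approach is correct in principle but considerably more laborious than the paper's. You observe that $F$ and $G$ are each convex and then bound them separately via chords and tangents on a partition; the paper instead shows that the \emph{difference} $f=F-G$ is itself convex on $(0,0.9)$. Indeed, $f''(p)=\sum_{n\geq 1}(2n+1+p)^{-2}-\tfrac12\log^2(\tfrac32)\,(3/2)^{p/2}\big(2-(3/2)^{p/2}\big)^{-2}$ is plainly decreasing, and a single numerical check $f''(0.9)>0$ gives convexity. With $f(0)=0$ and $f(0.69)<0$ the chord inequality then forces $f<0$ on the whole of $(0,0.69)$, so only one endpoint evaluation is needed rather than a refined partition. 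What your route buys is that it avoids comparing second derivatives directly; what the paper's buys is that the entire verification collapses to two numbers. One small caveat in your scheme: on the first subinterval $[0,p_1]$ any tangent to $G$ at $c_1>0$ satisfies $T_1(0)<G(0)=0=L_1(0)$, so the endpoint check at $p=0$ fails unless you take $c_1=0$ (or otherwise handle a neighbourhood of $0$ via your derivative comparison $F'(0)<G'(0)$); this is easily patched but should be made explicit.
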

\begin{proof}
{\red We show that}
\[
f(p) =\log(2-(3/2)^{p/2}) +  \log \psi(p)
\]
is negative on $(0, 0.69)$. By virtue of \eqref{eq:psi-prod},
\[
f''(p) = -\frac{1}{2}\log^2\left(\frac{3}{2}\right)\frac{(3/2)^{p/2}}{(2-(3/2)^{p/2})^2}+\sum_{n=1}^\infty \frac{1}{(2n+1+p)^2} 
\]
This is plainly a decreasing function. Using $\sum_{n=1}^\infty \frac{1}{(2n+1+p)^2}  \geq \sum_{n=1}^\infty \frac{1}{(2n+2)^2} = \frac{\pi^2-6}{24}$, we get with $f''(0.9) > 0.007$, so $f$ is strictly convex on $(0,0.9)$. Checking that $f(0) = 0$ and $f(0.69) < -0.0001$ finishes the proof.
\end{proof}

\begin{lemma}\label{lm:gamma-all-p}
For $0 < p < 1$, we have $\psi(p) < 1 + \frac{p(p+1)}{6}$.
\end{lemma}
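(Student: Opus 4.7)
The plan is to expand $\log \psi(p)$ as a power series in $p$ using \eqref{eq:psi-prod}, truncate it to the quadratic term via an alternating-series estimate, and then reduce to a polynomial inequality on $(0,1]$.

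First, I take the logarithm of \eqref{eq:psi-prod}, expand each factor $-\log(1+\tfrac{p}{2n+1})$ as an absolutely convergent Taylor series in $p$, and interchange the order of summation. Using the identity $\sum_{n\geq 1}\frac{1}{2n(2n+1)} = 1-\log 2$, this yields
\[
\log \psi(p) = a_1 p + \sum_{k=2}^\infty \frac{(-1)^k c_k}{k}\, p^k,
\]
where $a_1 = 1-\frac{\log 3 + \gamma}{2}$ and $c_k := \sum_{n\geq 1}(2n+1)^{-k} = (1-2^{-k})\zeta(k)-1$.

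Next, since $(2n+1)^{-1} \leq 1/3$ for $n\geq 1$, we have $c_{k+1}/c_k \leq 1/3$, so for $p \in (0,1]$ the positive quantities $\tfrac{c_k p^k}{k}$ are strictly decreasing in $k\geq 2$. The tail above is therefore an alternating series with decreasing absolute values, and since the first omitted term $-\tfrac{c_3}{3}p^3$ is negative, truncating at $k = 2$ gives
\[
\log \psi(p) \leq a_1 p + a_2 p^2, \qquad a_2 := \frac{c_2}{2} = \frac{\pi^2}{16} - \frac{1}{2}.
\]

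It then suffices to show $G(p) := \log(1 + p(p+1)/6) - a_1 p - a_2 p^2 > 0$ for $p \in (0,1]$. One has $G(0) = 0$, and direct numerical checks give $G'(0) = \tfrac{1}{6} - a_1 > 0$ and $G(1) > 0$ (the latter equivalent to $\log(16/3) + \gamma > 1 + \pi^2/8$). Finally, $G''(p) = \tfrac{11-2p-2p^2}{(6+p+p^2)^2} - 2a_2$, and the fraction here is a ratio of a decreasing positive function to an increasing positive one on $[0,1]$, hence itself strictly decreasing. Since $G''(0) > 0 > G''(1)$, this makes $G'$ unimodal; combined with $G'(0) > 0 > G'(1)$, $G'$ has a unique zero in $(0,1)$, so $G$ is unimodal on $[0,1]$, and $G(0) = 0$, $G(1) > 0$ then force $G > 0$ on $(0,1]$. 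The main obstacle is the tightness of the endpoint comparisons---$G(1) > 0$ has slack only about $0.009$ and depends on the explicit values of $\gamma$, $\log 3$, and $\pi^2$---so all numerical checks must be carried out with sufficient precision.
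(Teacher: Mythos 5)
Your proof is correct, and it takes a genuinely different route from the paper's in its second half. Both arguments start from the Weierstrass product representation \eqref{eq:psi-prod}, but the paper keeps $f(p)=\log\psi(p)-\log(1+\tfrac{p(p+1)}{6})$ intact, shows $f(0)=0$, and proves $f'<0$ throughout $(0,1)$ by splitting $f'$ into a convex rational piece and a concave series piece, the latter majorised by tangent lines at $p=0$ and $p=1$. You instead discard information early: you replace $\log\psi(p)$ by the explicit quadratic majorant $a_1p+a_2p^2$ obtained from the Leibniz estimate on the alternating tail (the key observation $c_{k+1}/c_k\leq 1/3$ is what makes the truncation legitimate), and then handle the resulting elementary inequality by a clean second-derivative analysis of $G$. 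Your checks are all valid --- I verified $a_1=1-\tfrac{\log 3+\gamma}{2}\approx 0.1621$, $a_2=\tfrac{\pi^2}{16}-\tfrac12\approx 0.1169$, the monotonicity of the fraction in $G''$, and the sign pattern $G'(0)>0>G'(1)$, $G''(0)>0>G''(1)$, $G(1)>0$, which together force $G>0$ on $(0,1]$ exactly as you argue. What the truncation buys is a completely explicit target inequality with no infinite series left in the endgame; what it costs is slack: the original difference $-f(1)\approx 0.023$ shrinks to $G(1)\approx 0.009$ after dropping the cubic term, and $G'(0)=\tfrac16-a_1\approx 0.0046$ is razor-thin, so (as you note) the numerical constants $\gamma$, $\log 3$, $\pi^2$ must be pinned down to three or four decimal places --- comparable in delicacy to the endpoint evaluations the paper performs, but concentrated in two scalar inequalities rather than four.
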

\begin{proof}
{\red We show that}
\[
f(p) = -\log\left(1 + \frac{p(p+1)}{6}\right) + \log\psi(p)
\]
is negative on $(0,1)$. Since $f(0) = 0$, it suffices to show that $f'(p) < 0$ on $(0,1)$. Using \eqref{eq:psi-prod}, we have
\[
f'(p) = -\frac{2p+1}{p^2+p+6} + \frac{1}{2}\left(\log(4/3)-\gamma\right) + \sum_{n=1}^\infty\left(\frac{1}{2n} - \frac{1}{2n+1+p}\right).
\]
Now, for $R(p) = -\frac{2p+1}{p^2+p+6} + \frac{1}{2}\left(\log(4/3)-\gamma\right)$, $R''(p) = \frac{(2p+1)(17-p^2-p)}{(p^2+p+6)^3} > 0$ on $(0,1)$, so $R(p)$ is convex on $(0,1)$. Let $S(p) = \sum_{n=1}^\infty\left(\frac{1}{2n} - \frac{1}{2n+1+p}\right)$. Plainly, this is a concave function. Thus, using tangents at $p=0$ and $p=1$, $S(p) \leq \min\{L_0(p),L_1(p)\}$ with $L_0(p) = S(0) + S'(0)p = (1-\log 2) + (\frac{\pi^2}{8}-1)p$ and $L_1(p) = S(1) + S'(1)(p-1)= \frac{1}{2} + \frac{\pi^2-6}{24}(p-1)$. We obtain the upper-bounds on $f'(p)$ by the convex functions $R(p) + L_0(p)$ and $R(p) + L_1(p)$. Examining the end-points we conclude that the former is negative on $(0,0.5)$ and the latter is negative on $(0.4,1)$. Thus $f'(p) < 0$ on $(0,1)$, as desired.
\end{proof}

\section{Integral inequality: proofs of Theorems \ref{thm:s>1} and \ref{thm:s>2}}\label{sec:intineq}

First observe that using the integral expression for $\mathcal{I}_p(\infty)$, inequality \eqref{eq:Ip} becomes
\begin{align}
0 \leq \mathcal{I}_p(s) - \mathcal{I}_p(\infty) &= \kappa_p \int_0^\infty \frac{e^{-t^2/6} - \left|\frac{\sin(t/\sqrt{s})}{t/\sqrt{s}}\right|^s}{t^{p+1}} \dd t \notag\\\label{eq:Ipp}
&= \kappa_ps^{-p/2}\int_0^\infty \frac{e^{-st^2/6}-\left|\frac{\sin t}{t}\right|^s}{t^{p+1}} \dd t.
\end{align}
To tackle such an inequality with an oscillatory integrand, we rely on the following extremely efficient and powerful lemma of Nazarov and Podkorytov from \cite{NP} (for the proof, see e.g. \cite{Ko}).

\begin{lemma}[Nazarov-Podkorytov, \cite{NP}]\label{lm:NP}
Let {\red $M\in(0,\infty]$} and $f,g:X\to[0,M]$ be any two measurable functions on a measure space $(X,\mu)$. Assume that the \textit{modified distribution functions}
\[
F(y) = \mu(\{x\in X: f(x)<y\}) \hspace{20pt}\hbox{ and } \hspace{20pt} G(y) = \mu(\{x\in X : g(x)<y\})
\]
of $f$ and $g$ respectively are finite for every $y\in(0,M)$. If there exists $y_*\in(0,M)$ such that $G(y)\gr F(y)$ for all $y\in(0,y_*)$, $G(y)\ls F(y)$ for all $y\in(y_*,M)$, then the function
\[
s \mapsto \frac{1}{sy_0^s}\int_X(g^s-f^s)\,d\mu
\]
is increasing on the set $\{s>0:g^s-f^s\in L^1(X,\mu)\}$.
\end{lemma}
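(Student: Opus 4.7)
The plan is to apply the layer-cake representation for $h^s$ with $h\in[0,M]$, recast the integral over $X$ as an integral in the parameter $y$ against the signed difference $D(y) := F(y) - G(y)$, and then exploit the elementary monotonicity of $s\mapsto u^{s-1}$ together with the sign hypothesis on $D$.

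First I would start from the pointwise identity $h^s = s\int_0^M y^{s-1}\1_{y<h}\,\dd y$, valid for any $h\in[0,M]$, so that
$$g^s - f^s = s\int_0^M y^{s-1}\bigl[\1_{y<g} - \1_{y<f}\bigr]\,\dd y.$$
Integrating over $X$ with respect to $\mu$ and applying Fubini yields
$$\int_X (g^s - f^s)\,\dd\mu = s\int_0^M y^{s-1}\bigl[F(y)-G(y)\bigr]\,\dd y,$$
the atoms of $F$ and $G$ being at most countably many and contributing nothing to the $y$-integral. After the change of variables $u = y/y_*$ this rewrites as
$$\Phi(s) := \frac{1}{sy_*^s}\int_X(g^s-f^s)\,\dd\mu = \int_0^{M/y_*} u^{s-1}\widetilde D(u)\,\dd u, \qquad \widetilde D(u) := D(y_* u).$$

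With this representation in hand the rest is transparent. The hypothesis says $\widetilde D \leq 0$ on $(0,1)$ and $\widetilde D \geq 0$ on $(1,M/y_*)$, so I would split at $u=1$,
$$\Phi(s) = -\int_0^1 u^{s-1}|\widetilde D(u)|\,\dd u + \int_1^{M/y_*} u^{s-1}\widetilde D(u)\,\dd u,$$
and observe that for $u\in(0,1)$ the map $s\mapsto u^{s-1}$ is strictly decreasing (since $\partial_s u^{s-1} = u^{s-1}\log u < 0$), while for $u\in(1,M/y_*)$ it is strictly increasing. Consequently the first (nonpositive) term is nondecreasing in $s$ and so is the second (nonnegative) one, which gives the desired monotonicity of $\Phi$ on the set where $g^s-f^s\in L^1(\mu)$; the increase is strict as soon as $F\not\equiv G$.

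The only step requiring genuine care is the Fubini exchange in the first paragraph, since $\mu(X)$ need not be finite and the integrand is signed. I would handle this by splitting $X$ into $\{f<g\}$ and $\{f>g\}$, on each of which the bracket $\1_{y<g}-\1_{y<f}$ has a definite sign, and applying Tonelli separately to the two resulting nonnegative pieces; finiteness of the iterated integrals then follows at once from the standing assumption $g^s-f^s\in L^1(\mu)$. No other technicalities are expected.
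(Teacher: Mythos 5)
Your proof is correct, and it is essentially the standard argument: the paper itself does not prove this lemma but defers to \cite{Ko}, where the same layer-cake representation $\int_X(g^s-f^s)\,\dd\mu = s\int_0^M y^{s-1}(F(y)-G(y))\,\dd y$ and the monotonicity of $s\mapsto (y/y_*)^{s-1}$ on either side of $y_*$ are used. Your handling of the Fubini step (splitting into $\{f<g\}$ and $\{f>g\}$ and applying Tonelli) correctly addresses the only real technicality, namely that $\mu(X)$ may be infinite.
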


In view of \eqref{eq:Ip}, \eqref{eq:cor} and \eqref{eq:Ipp}, Theorems \ref{thm:s>1} and \ref{thm:s>2} immediately follow from the following lemma.

\begin{lemma}
Let $f(t) = \left|\frac{\sin t}{t}\right|$, $g(t) = e^{-t^2/6}$, $t > 0$, and set
\[
H(p,s) = \int_0^\infty \frac{g(t)^s - f(t)^s}{t^{p+1}} \dd t.
\]
We have,

(a) $H(p,s) \geq 0$ for every $0 < p < 1$ and $s \geq 2$,

(b) $H(p,s) \geq 0$ for every $0.6 < p < 1$ and $s \geq 1$.
\end{lemma}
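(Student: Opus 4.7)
Apply the Nazarov--Podkorytov lemma (Lemma \ref{lm:NP}) on the measure space $(X, \mu) = ((0, \infty), t^{-p-1}\,\dd t)$ with $f(t) = |\sin t/t|$ and $g(t) = e^{-t^2/6}$, both taking values in $[0, 1)$. If the modified distribution functions $F(y) = \mu(\{f < y\})$ and $G(y) = \mu(\{g < y\})$ cross monotonically at some $y_* \in (0,1)$ (meaning $G \geq F$ on $(0, y_*)$ and $G \leq F$ on $(y_*, 1)$), then $s \mapsto H(p, s)/(s y_*^s)$ is non-decreasing on $\{s > 0 : g^s - f^s \in L^1\}$, so establishing $H(p, s_0) \geq 0$ at a base value $s_0$ propagates nonnegativity to all $s \geq s_0$. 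The plan is therefore to fix $s_0 = 2$ for part (a) and $s_0 = 1$ for part (b).

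Computing the distribution functions: $G$ is explicit, $G(y) = (-6 \log y)^{-p/2}/p$, since $g$ bijects $(0, \infty) \to (0, 1)$. The function $F$ requires careful bookkeeping of the oscillations of $f$: it is strictly decreasing from $1$ to $0$ on $(0, \pi)$, and on each $((k-1)\pi, k\pi)$ with $k \geq 2$, it rises from $0$ to a local maximum $y_{k-1} \in [1/((k-1/2)\pi), 1/((k-1)\pi)]$ (Lemma \ref{lm:maxima}) before descending back to $0$. Hence $\{f < y\}$ decomposes as $(\delta_0(y), \pi)$ plus, on each later interval $((k-1)\pi, k\pi)$, either two sub-intervals around the endpoints (when $y < y_{k-1}$) or the whole interval (when $y \geq y_{k-1}$). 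In the two extreme regimes the comparison is immediate: near $y = 1$, Lemma \ref{lm:f<g} ($f < g$ on $(0, \pi)$) yields $\delta_0(y) < \sqrt{-6 \log y}$ and hence $F(y) > G(y)$; near $y = 0$, a tail estimate using $|\sin t/t| \leq 1/t$ and Lemma \ref{lm:maxima} gives $F(y) = O(y^p)$, which is dominated by $G(y) \sim (\log(1/y))^{-p/2}/p$.

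The main obstacle is proving uniqueness of the crossing in the intermediate region, i.e.\ that $G - F$ has a single sign change. This is precisely where the fine sinc estimates of Section \ref{sec:lemmas} (Lemmas \ref{lm:der}, \ref{lm:sin}, \ref{lm:t0}, \ref{lm:slopes}) together with the $p$-th power sum bounds (Lemmas \ref{lm:der-p}, \ref{lm:sum-p}) do the heavy lifting, quantitatively controlling the widths of the oscillation-wise sub-intervals so that the successive corrections to $F$ accrue monotonically in $y$ with the right sign.

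Finally, the base cases are dispatched via Lemma \ref{lm:fourier}. For (a), $f^2 = \sin^2 t/t^2 = \phi_{U_1+U_2}(t)$ is a genuine characteristic function, so $H(p, 2) \geq 0$ is equivalent to $\|U_1 + U_2\|_p \geq \sqrt{2/3}\, \|Z\|_p$; the explicit moment $\|U_1+U_2\|_p^p = 2^{p+1}/[(p+1)(p+2)]$ reduces this to $\psi(p) \leq 2^{p+1}/(p+2)$ on $(0, 1)$, which follows from Lemma \ref{lm:gamma-all-p} (giving $\psi(p) < 1 + p(p+1)/6$) combined with the auxiliary inequality $1 + p(p+1)/6 \leq 2^{p+1}/(p+2)$, verifiable via the concavity of $\Phi(p) := 2^{p+1} - (p+2)(1 + p(p+1)/6)$ and its vanishing at $p = 0, 1$. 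For (b), since $|\sin t/t|$ is not itself a characteristic function, the identity $|\sin t/t| = \sin t/t + 2(\sin t/t)_-$ decomposes $\kappa_p H(p, 1)$ as the positive quantity $\|U_1\|_p^p - \|Z/\sqrt{3}\|_p^p$ minus an explicit oscillatory tail $2\kappa_p \sum_{k \geq 1}\int_{(2k-1)\pi}^{2k\pi}(-\sin t)/t^{p+2}\,\dd t$; the first difference dominates the tail precisely in the restricted range $0.6 < p < 1$.
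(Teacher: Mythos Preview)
Your overall architecture matches the paper's: apply the Nazarov--Podkorytov lemma to reduce to (1) a single sign change of $G-F$ and (2) base-case inequalities $H(p,2)\geq 0$ and $H(p,1)\geq 0$. Your treatment of Claim~A is also essentially the paper's argument.

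There are, however, two genuine gaps.

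\textbf{The single sign change.} You correctly flag this as the main obstacle and list the relevant lemmas, but ``the lemmas do the heavy lifting'' is not a plan. The paper's argument has a specific two-regime structure that you have not indicated: one fixes a threshold $a=y_{30}$ and shows (i) that $(G-F)'<0$ on $(a,y_1)$ by lower-bounding $F'/G'$ via Lemmas~\ref{lm:der} and~\ref{lm:der-p} (this is where the finite table for $1\leq m\leq 29$ is used), and (ii) that $G-F>0$ outright on $(0,a)$ by upper-bounding $F$ through the slope estimates of Lemma~\ref{lm:slopes} and then invoking Lemma~\ref{lm:sum-p}. Your remark that ``successive corrections to $F$ accrue monotonically in $y$ with the right sign'' does not correspond to either step, and neither the derivative comparison in the middle range nor the direct inequality in the tail is obvious from the lemmas alone. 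This is the technical heart of the lemma and needs an actual argument.

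\textbf{Claim B.} Your decomposition $\kappa_p H(p,1)=\|U_1\|_p^p-\|Z/\sqrt{3}\|_p^p-2\kappa_p\sum_{k\geq 1}\int_{(2k-1)\pi}^{2k\pi}(-\sin t)\,t^{-p-2}\,\dd t$ is correct, but the assertion that the first difference dominates the tail ``precisely in the restricted range $0.6<p<1$'' is unproved and not obviously true (the margin in the paper's computation, $H(0.6,1)>0.0003$, is tiny). The paper takes a different route: it shows $\partial_p H(p,1)>0$ on $(0.6,1)$ via careful splitting of the integral and chord/tangent bounds on $\log t$, and then verifies $H(0.6,1)>0$ by explicit Taylor and incomplete-Gamma estimates. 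Your alternative could conceivably be made to work, but as written it is an assertion rather than an argument.
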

\begin{proof}
Fix $0 < p < 1$. {\red We examine} the modified distribution functions
\begin{align*}
F(y) &= \mu(t > 0, \ f(t) < y), \\
G(y) &= \mu(t > 0, \ g(t) < y), \qquad 0 < y < 1,
\end{align*}
where $\dd \mu(t) = t^{-p-1} \dd t$. It suffices to show that 
\begin{equation}\label{eq:goal}\tag{$\star$}
G-F \text{ changes sign exactly once on $(0,1)$ at some $y = y_*$ from $+$ to $-$.}
\end{equation}
Then Lemma \ref{lm:NP} gives that 
\[
s \mapsto \frac{1}{sy_*^s}H(p,s)
\]
is increasing on $(0,\infty)$. In particular, (a) and (b) result from the following claims whose  proofs we defer until the end of this proof.

{\bf Claim A.} $H(p,2) \geq 0$ for every $0 < p < 1$.

{\bf Claim B.} $H(p,1) \geq 0$ for every $0.6 < p < 1$.

Towards \eqref{eq:goal}, let $1 = y_0 > y_1 > y_2 > \dots$ be the consecutive {\red maximum values} of $f$.
On $(0,\pi)$, $f \leq g$ (Lemma \ref{lm:f<g}), so $G-F < 0$ on $(y_1,1)$. We plan to find $a \in (0, y_1)$ with the following two properties

(i) $(G-F)' < 0$ on $(a, y_1)$,

(ii) $G-F > 0$ on $(0,a)$.

This clearly suffices to conclude \eqref{eq:goal}. 

Fix $m \in \{1, 2, \dots\}$ and $y \in (y_{m+1}, y_m)$. Plainly,
\[
G(y) = \int_{\sqrt{-6\log y}}^\infty \frac{\dd t}{t^{p+1}} = \frac{1}{p}\left(-6\log y\right)^{-p/2}.
\]
Let $t_0^+ = t_0^+(y)$ be the unique solution to $f(t) = y$ on $(0,\pi)$ and for each $1 \leq k \leq m$, let $t_k^- < t_k^+$ be the unique solutions to $f(t) = y$ on $(k\pi, (k+1)\pi)$ ($t_k^\pm = t_k^\pm(y)$ are functions of $y$). We have,
\begin{equation}\label{eq:F-mu}
F(y) = \mu(t_0^+, t_1^-) + \mu(t_1^+,t_2^-) + \dots + \mu(t_{m-1}^+, t_m^-) + \mu(t_m^+,\infty).
\end{equation}
\begin{center}
\begin{figure}[htb]
\hspace*{-6em}\includegraphics[scale=1.2]{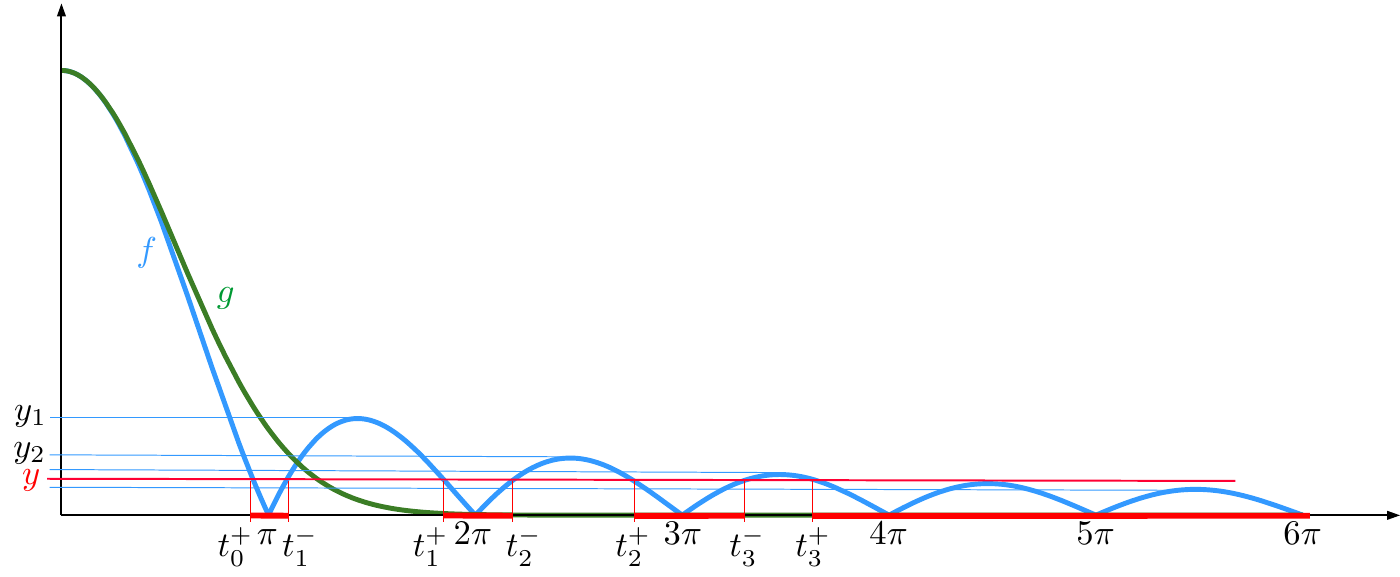}
\caption{Functions $f$, $g$ and the set $\{t > 0, f(t) < y\}$. Here $m=3$, i.e. $y_3 < y < y_4$.}
\end{figure}
\end{center}

\emph{Condition (i).} {\red Recall that $y\in(y_{m+1},y_m)$.} We have,
\[
G'(y) = \frac{3}{y}\left(-6\log y\right)^{-p/2-1}
\]
and, {\red differentiating \eqref{eq:F-mu} with respect to $y$ (using the fundamental theorem of calculus and chain rule),}
\[
F'(y) = \sum_{t: f(t) = y} \frac{1}{t^{p+1}|f'(t)|}.
\]
To lower bound $\frac{F'}{G'}$ in order to show that it is greater than $1$, we lower bound $F'$ and $\frac{1}{G'}$ separately as follows. First, using $|tf'(t)| = |\cos t - \frac{\sin t}{t}| < \frac{11}{10}$ for every $t > 0$ (Lemma \ref{lm:der}), we have, 
\[
F'(y) > \frac{10}{11}\sum_{t: f(t) = y} t^{-p} > \frac{10}{11}\pi^{-p}\left(1 + 2\sum_{k=1}^m (k+1)^{-p}\right),
\]
by crudely bounding $t_0^- < \pi$, $t_k^\pm < (k+1)\pi$. Second, since $y(-6\log y)^{p/2+1}$ is increasing on $(0,y_1)$ (it is increasing on $(0, e^{-1-p/2})$ and $e^{-1-p/2} > e^{-3/2} > y_1$), and $y_{m+1} > \frac{1}{\pi(m+3/2)}$ (Lemma \ref{lm:maxima}),
\begin{align*}
\frac{1}{G'(y)} = \frac{1}{3}y\left(-6\log y\right)^{p/2+1} &> \frac{1}{3}y_{m+1}\left(-6\log y_{m+1}\right)^{p/2+1} \\
&> \frac{1}{3}\frac{1}{\pi(m+3/2)}\left(6\log\Big(\pi(m+3/2)\Big)\right)^{p/2+1}.
\end{align*}
We obtain
\[
\frac{F'(y)}{G'(y)} > \frac{10}{33}\frac{1}{\pi^{p+1}(m+3/2)}\left(6\log\Big(\pi(m+3/2)\Big)\right)^{p/2+1}\left(1 + 2\sum_{k=1}^m (k+1)^{-p}\right).
\]
From Lemma \ref{lm:der-p} the right hand side is at least $1$ for every $0 < p < 1$ and $1 \leq m \leq 29$. Therefore, to guarantee that Condition (i) holds, we can choose any $a \geq y_{30}$.

We set $a = y_{30}$ and argue next that Condition (ii) holds for every $y \in (0,a)$. 

\emph{Condition (ii).}
We assume here that $m \geq 30$. Recall we have fixed $y \in (y_{m+1},y_m)$. Since $G$ is explicit, it suffices to upper bound $F$. We have,
\begin{align*}
F(y) &= \sum_{k=1}^m \int_{t_{k-1}^+}^{t_k^-} \frac{\dd t}{t^{p+1}} + \int_{t_m^+}^\infty \frac{\dd t}{t^{p+1}} \\
&\leq \sum_{k=1}^m (t_k^- - t_{k-1}^+)(t_{k-1}^+)^{-p-1} + \frac{1}{p}(t_m^+)^{-p}.
\end{align*}
For $k \geq 3$, we crudely estimate $t_{k-1}^+ \geq (k-1)\pi$, whereas for $k=1, 2$, we have $t_0^+ > 0.98\pi$ and $t_1^+ > 1.97\pi$, thanks to Lemma \ref{lm:t0}.
To upper bound the length $t_k^- - t_{k-1}^+$, note that with the aid of Figure \ref{fig:slopes},
\begin{center}
\begin{figure}[htb]
\includegraphics[scale=1]{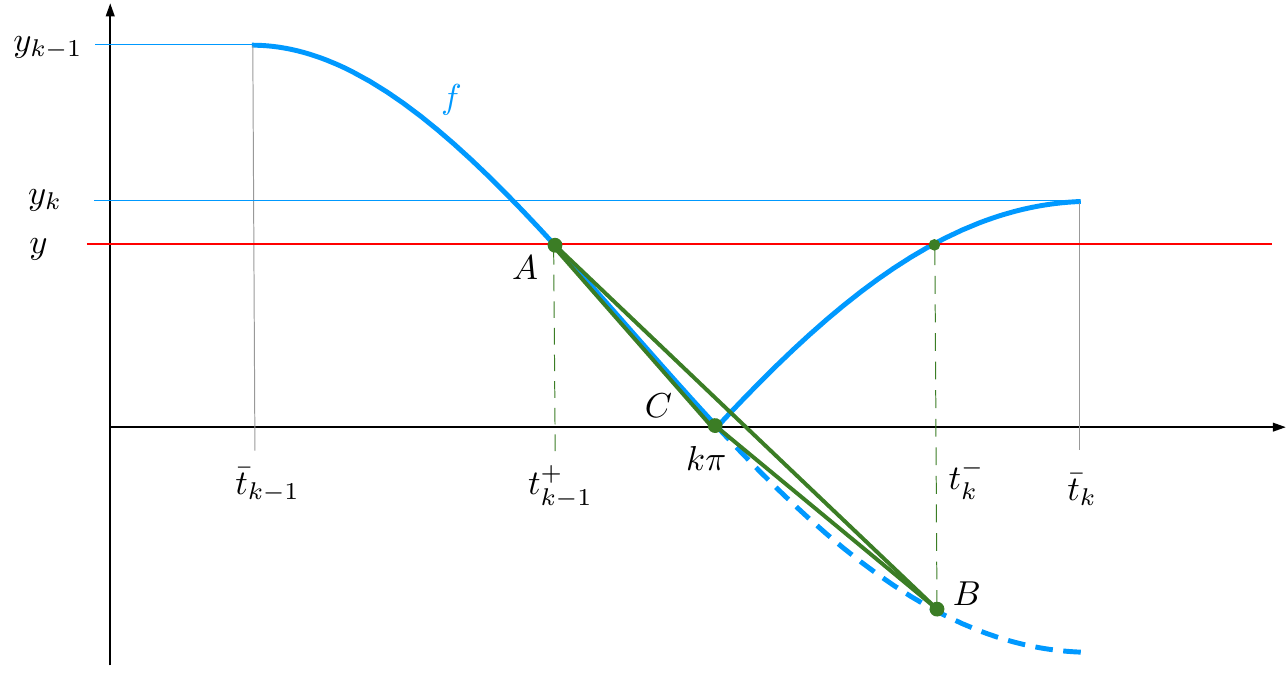}
\caption{The slope of the segment $AB$ is not smaller than the slope of either $AC$ or $BC$.}\label{fig:slopes}
\end{figure}
\end{center}
\begin{align*}
\frac{2y}{t_k^- - t_{k-1}^+} = \frac{\left|\frac{\sin t_k^-}{t_k^-} - \frac{\sin t_{k-1}^+}{t_{k-1}^+}\right|}{t_k^- - t_{k-1}^+} &= |\text{slope}(AB)| \\
&\geq \min\left\{|\text{slope}(AC)|, |\text{slope}(BC)|\right\}.
\end{align*}
{\red Let $\bar t_k \in (k\pi, (k+1)\pi)$ denote the point where $f(t)$ attains its local maximum $y_k$ on $(k\pi, (k+1)\pi)$.} Observe that
\[
|\text{slope}(BC)| = \frac{|\sin t_k^-|}{t_k^-(t_k^--k\pi)} \geq \frac{y_k}{\bar t_k - k\pi} \geq \frac{y_k}{\pi},
\]
where the first inequality follows from Lemma \ref{lm:slopes} (i) applied to $t_k^- < \bar t_k$. Similarly,
\[
|\text{slope}(AC)| = \frac{|\sin t_{k-1}^+|}{t_{k-1}^+(k\pi - t_{k-1}^+)} \geq \min\left\{\frac{y_{k-1}}{k\pi - \bar t_{k-1}}, \frac{1}{k\pi} \right\} \geq \min\left\{\frac{y_{k-1}}{\pi}, \frac{1}{k\pi} \right\},
\]
where in the first inequality we use Lemma \ref{lm:slopes} (ii) to lower bound the  function in question by the minimum of its values at the end-points $t = \bar t_{k-1}$ and $t = k \pi$. 
Finally, putting these two estimates together and using $y_k > \frac{1}{\pi (k+\frac{1}{2})}$, we obtain
\[
|\text{slope}(AB)| \geq \frac{1}{\pi^2(k+\frac{1}{2})}
\]
and, {\red consequently,}
\[
t_k^- - t_{k-1}^+ {\red = \frac{2y}{|\text{slope}(AB)|} } \leq 2\pi^2y\left(k + \frac{1}{2}\right),
\]
which results in 
\[
F(y) < 2\pi^{-p+1}y \left(\frac{3}{2}0.98^{-p-1}+\frac{5}{2}1.97^{-p-1}+\sum_{k=3}^m \left(k+\frac{1}{2}\right)(k-1)^{-p-1}\right) + \frac{1}{p}(m\pi)^{-p}.
\]
Since $y > y_{m+1} > \frac{1}{(m+\frac{3}{2})\pi}$, and $m \geq 30$, we have
\[
\frac{1}{p}(m\pi)^{-p} < \frac{1}{p}\left(\frac{m+3/2}{m}\right)^py^p \leq \frac{1}{p}1.05^py^p.
\]
Moreover, since $y < y_{m} < \frac{1}{m\pi}$, we have (crudely), $m-1 < \frac{1}{\pi y}$ and bounding the sum using the integral, we obtain
\begin{align*}
\sum_{k=3}^m \left(k+\frac{1}{2}\right)(k-1)^{-p-1} &= \sum_{k=2}^{m-1} \frac{k+\frac{3}{2}}{k^{p+1}}  \\
&< \int_1^{m-1} \left(x^{-p}+\frac{3}{2}x^{-p-1}\right) \dd x \\
&< \frac{(\pi y)^{p-1} - 1}{1-p} + \frac{3(1-(\pi y)^p)}{2p}.
\end{align*}
Therefore, in order to have $F(y) < G(y)$, it suffices to guarantee that
\begin{align*}
2\pi^{-p+1}y \Bigg(\frac{3}{2}0.98^{-p-1}+\frac{5}{2}1.97^{-p-1} &+ \frac{(\pi y)^{p-1} - 1}{1-p} + \frac{3(1-(\pi y)^p)}{2p}\Bigg)  \\
&\qquad+ \frac{1}{p}1.05^py^p < \frac{1}{p}\left(-6\log y\right)^{-p/2}
\end{align*}
holds for every $0 < p < 1$ and $0 < y < \frac{1}{30\pi}$. Since $-y\log y$ is increasing for $y < \frac{1}{e}$, we have $-\log y < \frac{\log(30\pi)}{30\pi}\frac{1}{y}$ for $0 < y < \frac{1}{30\pi}$. By monotonicity, for $0 < p < 1$, we have $\frac{3}{2}0.98^{-p-1}+\frac{5}{2}1.97^{-p-1} < \frac{3}{2}0.98^{-1-1}+\frac{5}{2}1.97^{-1} < 3$. It remains to use Lemma \ref{lm:sum-p}. This shows that Condition (ii) holds and the proof of the lemma is complete. It remains to show Claims A and B.
\end{proof}

\begin{proof}[Proof of Claim A]
By the integral representation for {\red the} $p$-norm from Lemma \ref{lm:fourier},
\[
\kappa_pH(p,2) = \E|U_1+U_2|^p-\E\left|\sqrt{\frac{2}{3}}Z\right|^p = \frac{2^{p+1}}{(p+1)(p+2)} - \frac{1}{\sqrt{\pi}}\left(\frac{4}{3}\right)^{p/2}\Gamma\left(\frac{1+p}{2}\right).
\]
By Lemma \ref{lm:gamma-all-p}, it suffices to prove that $2^{p+1} > (p+2)\left(1+\frac{p(p+1)}{6}\right)$ for all $0 < p < 1$. The 3rd derivative of the difference changes sign once on $(0,1)$ from $-$ to $+$. The 2nd derivative is negative at the end-points $p=0$ and $p=1$, so it is negative on $(0,1)$ and hence the difference is concave. It vanishes at the end-points $p=0$ and $p=1$, which finishes the argument.
\end{proof}

\begin{proof}[Proof of Claim B]
Our argument is split into two steps: first we show that $H(p,1)$ increases with $p$ and then we estimate $H(0.6,1)$. For somewhat similar computations, but related to random signs, see Section 5 in \cite{Mo}. In Step 1, to numerically evaluate the integrals in question, we will frequently use that given $0 < a < b$ and an integer $m$, integrals of the form $\int_a^b (\sin t)t^{-m} \dd t$  can be efficiently estimated to an arbitrary precision by expressing them in terms of the trigonometric integral functions $\text{Si}, \text{Ci}$. The same applies to the integrals of the form $\int_a^b e^{-t^2}t^{q}\dd t$ with $0 < a < b \leq \infty$ and real $q$, thanks to reductions to the incomplete gamma function $\Gamma$ and the exponential integral $\text{Ei}$. {\red We recall that for $x > 0$, $s \neq 0, -1, -2, \dots$,
\begin{align*}
\text{Si}(x) &= -\int_x^\infty \frac{\sin t}{t} \dd t = -\frac{\pi}{2}-\sum_{k=1}^\infty \frac{(-1)^kx^{2k-1}}{(2k-1)(2k-1)!}, \\
\text{Ci}(x) &= -\int_x^\infty \frac{\cos t}{t}\dd t = \gamma + \log x + \sum_{k=1}^\infty (-1)^k\frac{x^{2k}}{2k(2k)!},\\
\text{Ei}(-x) &= -\int_x^\infty \frac{e^{-t}}{t} \dd t = \gamma + \log x + \sum_{k=1}^\infty \frac{(-x)^k}{k\cdot k!}, \\
\Gamma(s, x) &= \int_x^\infty t^{s-1}e^{-t} \dd t = \Gamma(s) - \sum_{k=0}^\infty \frac{(-1)^kx^{s+k}}{k!(s+k)}
\end{align*}
(here $\gamma = 0.57721..$ is the Euler-Mascheroni constant).
These series representations allow to obtain arbitrarily good numerical approximations to these integrals.}

In Step 2, all the numerical computations are reduced to integrals of the form $\int_a^b \frac{\dd t}{t^q}$ which are explicit.

\emph{Step 1: $\frac{\partial}{\partial p} H(p,1) > 0$, $0.6 < p < 1$.} We have,
\[
\frac{\partial}{\partial p} H(p,1) = \int_0^\infty (-\log t)\frac{g(t) - f(t)}{t^{p+1}} \dd t.
\]
We break the integral into several regions. Recall $g > f$ on $(0, \pi)$, by Lemma \ref{lm:f<g}. Thus, plainly,
\[
\int_0^1 (-\log t)\frac{g(t) - f(t)}{t^{p+1}} \dd t > 0.
\]
Moreover, $g-f$ changes sign from $+$ to $-$ exactly once on $(\pi, 4)$ at $t = 3.578..$. Let $t_0 = 3.57$. On $(1,t_0)$, using $t^{-p-1} = t^{1-p}t^{-2} \leq t_0^{1-p}t^{-2}$, we obtain
\[
\int_1^{t_0} (-\log t)\frac{g(t) - f(t)}{t^{p+1}} \dd t \geq t_0^{1-p}\int_1^{t_0} (-\log t)\frac{g(t) - f(t)}{t^{2}} \dd t > -0.0297\cdot t_0^{1-p},
\]
where in the last inequality we use $\log t \leq \log\frac{5}{2} + \frac{2}{5}(t-\frac{5}{2})$ (by concavity) and then estimate the resulting integrals. Now, 
\[
\int_{t_0}^\infty (-\log t)\frac{g(t) - f(t)}{t^{p+1}} \dd t = \int_{t_0}^\infty (\log t)\frac{f(t)}{t^{p+1}} \dd t - \int_{t_0}^\infty (\log t)\frac{g(t)}{t^{p+1}} \dd t.
\]
For $t > t_0$, $t^{-p-1} = t^{1-p}t^{-2} > t_0^{1-p}t^{-2}$ and for $k \geq 1$, $\log t \geq \ell_k(t)$ on $(k\pi, (k+1)\pi)$ with
\[
\ell_k(t) = \frac{(k+1)\pi-t}{\pi}\log(k\pi) + \frac{t-k\pi}{\pi}\log((k+1)\pi),
\]
thus
\begin{align*}
\int_{t_0}^\infty (\log t)\frac{f(t)}{t^{p+1}} \dd t   &\geq t_0^{1-p}\left(\int_{t_0}^{2\pi} \ell_1(t)\frac{-\sin t}{t^3} \dd t + \sum_{k=2}^n \int_{k\pi}^{(k+1)\pi} \ell_k(t)\frac{(-1)^k\sin t}{t^3} \dd t\right).
\end{align*}
For $n = 5$, this gives
\[
\int_{t_0}^\infty (\log t)\frac{f(t)}{t^{p+1}} \dd t  > 0.0437\cdot t_0^{1-p}.
\]
Finally, since $\log u \leq \frac{u}{e}$, $u > 0$, we have $\frac{\log t}{t^p} < \frac{1}{ep} < \frac{1}{0.6e} < 0.6132 < 0.6132\cdot t_0^{1-p}$, thus
\[
\int_{t_0}^\infty (\log t)\frac{g(t)}{t^{p+1}} \dd t \leq 0.6132\cdot t_0^{1-p}\int_{t_0}^\infty \frac{e^{-t^2/6}}{t} \dd t < 0.0127\cdot t_0^{1-p}.
\]
Putting these together yields
\[
\frac{\partial}{\partial p} H(p,1) > (0.0437 - 0.0297 - 0.0127)t_0^{1-p} = 0.0013\cdot t_0^{1-p} > 0.
\]

\emph{Step 2: $H(0.6, 1) > 0$.} We have,
\[
H(0.6, 1) = \int_0^\pi \frac{e^{-t^2/6}-\frac{\sin t}{t}}{t^{8/5}} \dd t + \int_{\pi}^\infty \frac{e^{-t^2/6}}{t^{8/5}} \dd t - \int_{\pi}^\infty \frac{|\sin t|}{t^{13/5}}\dd t.
\]
On $(0,\pi)$, we use Taylor's polynomials to bound the integrand,
\[
e^{-t^2/6}-\frac{\sin t}{t} > \sum_{k=0}^7 \frac{(-t^2/6)^k}{k!} - \sum_{k=0}^6 \frac{(-1)^kt^{2k}}{(2k+1)!}.
\]
Plugging this into the integral results in
\[
\int_0^\pi \frac{e^{-t^2/6}-\frac{\sin t}{t}}{t^{8/5}} \dd t > 0.0434.
\]
Using the incomplete Gamma function, 
\[
\int_{\pi}^\infty \frac{e^{-t^2/6}}{t^{8/5}} \dd t > 0.0184.
\]
Finally, 
\begin{align*}
\int_{\pi}^\infty \frac{|\sin t|}{t^{13/5}} \dd t &= \int_0^\pi (\sin t)\left(\sum_{k=1}^\infty \frac{1}{(t+k\pi)^{13/5}} \right)\dd t \\
&\leq \int_0^\pi (\sin t)\left(\sum_{k=1}^n \frac{1}{(t+k\pi)^{13/5}} \right)\dd t  +  \int_{(n+1)\pi}^\infty \frac{\dd t}{t^{13/5}}.
\end{align*}
We use Taylor's polynomial again, $\sin t \leq 1-\frac{1}{2}(t-\pi/2)^2 + \frac{1}{24}(t-\pi/2)^4$. Choosing $n=8$ gives
\[
\int_{\pi}^\infty \frac{|\sin t|}{t^{13/5}} \dd t < 0.0615.
\]
Adding up these estimates yields $H(0.6, 1) > 0.0434 + 0.0184 - 0.0615 = 0.0003$. 
\end{proof}

\section{Inductive argument}\label{sec:ind}

As explained in Section \ref{sec:proof}, Theorem \ref{thm:s>2} gives the following corollary (we use homogeneity to rewrite \eqref{eq:cor} in an equivalent form, better suited for the ensuing arguments). Recall $c_p = \|Z\|_p/\sqrt{3}$ and define
\[
\varphi_p(x) = (1+x)^{p/2}, \qquad x \geq 0.
\]
\begin{corollary}\label{cor}
Let $0 < p < 1$. For every $n \geq 2$ and real numbers $a_2, \dots, a_n$ with {\red $\sum_{j=2}^n a_j^2 \gr 1$ and $a_j^2\ls 1$ for every $j=2,\ldots,n$}, we have
\[
\E\left|U_1 + \sum_{j=2}^n a_jU_j\right|^p \geq c_p^p\cdot \varphi_p\left(\sum_{j=2}^n a_j^2\right).
\]
\end{corollary}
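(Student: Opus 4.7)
The statement is essentially a homogeneous reformulation of inequality \eqref{eq:cor} under the hypothesis that each coefficient squared is at most $1/2$, so I expect no new work beyond scaling and checking that the translated hypotheses fit Theorem \ref{thm:s>2}. The plan is to reduce to the unit-variance setting, apply the AM-GM / Fourier argument already developed in Section \ref{sec:proof}, and then undo the scaling.

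Concretely, set $S = 1 + \sum_{j=2}^n a_j^2$, so by assumption $S \geq 2$. Define new coefficients
\[
b_1 = \frac{1}{\sqrt{S}}, \qquad b_j = \frac{a_j}{\sqrt{S}} \ \ (j = 2, \dots, n).
\]
Then $\sum_{j=1}^n b_j^2 = 1$, and, crucially, each $b_j$ satisfies $b_j^2 \leq 1/2$: indeed $b_1^2 = 1/S \leq 1/2$ since $S \geq 2$, and for $j \geq 2$ the assumption $a_j^2 \leq 1 \leq S/2$ gives $b_j^2 = a_j^2/S \leq 1/2$. In particular $1/b_j^2 \geq 2 = s_0$ for every $j$, so Theorem~\ref{thm:s>2} applies to every term appearing in the AM-GM lower bound from Section~\ref{sec:proof}, yielding
\[
\E\left|\sum_{j=1}^n b_j U_j\right|^p \geq c_p^p.
\]

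By $p$-homogeneity of $\E|\cdot|^p$, multiplying the previous display by $S^{p/2}$ gives
\[
\E\left|U_1 + \sum_{j=2}^n a_j U_j\right|^p = S^{p/2}\,\E\left|\sum_{j=1}^n b_j U_j\right|^p \geq c_p^p \cdot S^{p/2} = c_p^p \,\varphi_p\!\left(\sum_{j=2}^n a_j^2\right),
\]
which is the desired inequality. The only step that deserves any care is checking that the rescaled coefficients $b_j$ all land in the range $[-1/\sqrt{2},1/\sqrt{2}]$ where Theorem~\ref{thm:s>2} is available; this is precisely where the two hypotheses $\sum_{j=2}^n a_j^2 \geq 1$ and $a_j^2 \leq 1$ are each used, one for $b_1$ and one for the remaining $b_j$. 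There is no genuine obstacle beyond this bookkeeping.
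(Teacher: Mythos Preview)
Your proof is correct and is exactly the homogeneity rewriting the paper alludes to: the paper states the corollary as an immediate consequence of Theorem~\ref{thm:s>2} via \eqref{eq:cor} and homogeneity, and your rescaling $b_j = a_j/\sqrt{S}$ (with $a_1=1$) spells out precisely that step. The verification that each $b_j^2 \leq 1/2$ is the only content, and you handle it correctly using both hypotheses.
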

The goal here is to remove the restriction on the $a_j$'s. The key idea from \cite{NP} is to replace $\varphi_p$ with a pointwise \emph{larger} function, thereby strengthening the inequality and to proceed by induction on $n$. We use the function from \cite{NP}, 
\[
\Phi_p(x) = \begin{cases} \varphi_p(x), & x \geq 1, \\ 2\varphi_p(1) - \varphi_p(2-x), & 0 \leq x \leq 1. \end{cases}
\]
Even though this function changes from being convex to concave at $x=1$, it is designed to satisfy the following \emph{extended convexity} property on $[0,2]$, crucial for the proof.
\begin{lemma}[Nazarov-Podkorytov, \cite{NP}]\label{lem:NP-Phi}
For every $0 < p < 2$ and $a, b \in [0,2]$ with $a+b \leq 2$, we have
\[
\frac{\Phi_p(a)+\Phi_p(b)}{2} \geq \Phi_p\left(\frac{a+b}{2}\right).
\]
\end{lemma}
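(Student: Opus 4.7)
The plan is to proceed by a two-case analysis according to the location of the larger of $a,b$ relative to $1$. WLOG assume $a \leq b$. Since $a+b \leq 2$, the midpoint $m = (a+b)/2$ lies in $[0,1]$, so $\Phi_p(m) = 2\varphi_p(1) - \varphi_p(2-m)$ by definition. I would split on whether $b \leq 1$ or $b > 1$.

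In the first case $b \leq 1$, all three of $a,b,m$ lie in $[0,1]$. Substituting $\Phi_p(x) = 2\varphi_p(1) - \varphi_p(2-x)$ throughout and cancelling, the claimed inequality is equivalent to
\[
\varphi_p(2-m) \geq \tfrac{1}{2}\bigl[\varphi_p(2-a) + \varphi_p(2-b)\bigr].
\]
Since $2-m$ is the midpoint of $2-a$ and $2-b$, this is Jensen's inequality for the concave function $\varphi_p$ (indeed $\varphi_p''(x) = \tfrac{p}{2}(\tfrac{p}{2}-1)(1+x)^{p/2-2} \leq 0$ for $0 < p \leq 2$), and this case is done.

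For the second case $a \leq 1 < b$, the constraint $a+b \leq 2$ reads $1-a \geq b-1$. Introduce the parameters $s := 1-a$ and $t := b-1$, so that $0 \leq t \leq s \leq 1$ and $m = 1 - (s-t)/2 \leq 1$. Substituting $\Phi_p(a) = 2\cdot 2^{p/2} - (2+s)^{p/2}$, $\Phi_p(b) = (2+t)^{p/2}$, and $\Phi_p(m) = 2\cdot 2^{p/2} - (2+(s-t)/2)^{p/2}$ into the claim simplifies it to
\[
F(s) := (2+t)^{p/2} - (2+s)^{p/2} + 2\Bigl(2 + \tfrac{s-t}{2}\Bigr)^{p/2} \geq 2^{1+p/2}, \qquad s \in [t,1].
\]
At $s = t$ the inequality is an equality ($F(t) = 2 \cdot 2^{p/2}$), and for fixed $t$ one computes
\[
F'(s) = \tfrac{p}{2}\Bigl[\bigl(2 + \tfrac{s-t}{2}\bigr)^{p/2-1} - (2+s)^{p/2-1}\Bigr] \geq 0,
\]
since $p/2 - 1 < 0$ and $2 + (s-t)/2 \leq 2+s$. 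Thus $F$ is nondecreasing on $[t,1]$, giving the inequality. The only mildly subtle piece is this mixed case; however, the antisymmetry $\Phi_p(1+u) + \Phi_p(1-u) = 2\Phi_p(1)$ built into the definition of $\Phi_p$ means the problem naturally collapses to the one-variable monotonicity check above, so I do not anticipate a genuine obstacle.
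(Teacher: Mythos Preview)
Your proof is correct. Note, however, that the paper does not supply its own proof of this lemma: it is quoted as a result of Nazarov and Podkorytov \cite{NP} and used as a black box, so there is no in-paper argument to compare against. Your two-case split (both points in $[0,1]$ versus the mixed case) together with the monotonicity computation for $F$ is a clean, self-contained verification; both cases ultimately rest on the concavity of $\varphi_p$ for $0<p<2$, which is exactly what one expects given that $\Phi_p$ on $[0,1]$ is the reflection of $\varphi_p$ about the point $(1,\varphi_p(1))$.
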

As in \cite{CKT},  in order to have certain algebraic identities, we run the argument for $\xi_1, \xi_2, \dots$, independent random vectors in $\R^3$ uniformly distributed on the centred unit Euclidean sphere $S^2$. Here $\scal{\cdot}{\cdot}$ and $\|\cdot\|$ is the standard inner product and the resulting Euclidean norm in $\R^3$, respectively. 

\begin{theorem}\label{thm:ind}
Let $0 < p < 0.69$. For every $n \geq 2$ and vectors $v_2, \dots, v_n$ in $\R^3$, we have
\begin{equation}\label{eq:ind}
\E\left|\scal{e_1}{\xi_1} + \sum_{j=2}^n \scal{v_j}{\xi_j}\right|^p \geq c_p^p\cdot \Phi_p\left(\sum_{j=2}^n \|v_j\|^2\right).
\end{equation}
Here $e_1 = (1,0,0)$, the unit vector of the standard basis.
\end{theorem}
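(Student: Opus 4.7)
The plan is to carry out a Nazarov-Podkorytov style induction on $n$, following \cite{NP} and adapted to uniforms as in \cite{CKT}. The rotational invariance of each $\xi_j$ on $S^2$ gives $\scal{v_j}{\xi_j}\overset{d}{=}\|v_j\|U_j$ for independent $U_j$ uniform on $[-1,1]$, so \eqref{eq:ind} is essentially a statement about sums of uniforms; nonetheless, the vector formulation is indispensable in the inductive step, where the extra rotational freedom underpins the algebraic identities of \cite{CKT}. The induction base will be the $n=1$ instance (empty sum on the right of \eqref{eq:ind}), which reads $\E|U_1|^p\geq c_p^p\Phi_p(0)$; after substituting $c_p^p=(2/3)^{p/2}\Gamma\left(\frac{1+p}{2}\right)/\sqrt{\pi}$ and $\Phi_p(0)=2\cdot 2^{p/2}-3^{p/2}$ and collecting terms, this is algebraically equivalent to $\psi(p)\leq 1/(2-(3/2)^{p/2})$, which is precisely Lemma \ref{lm:gamma-small-p} -- its range $0<p<0.69$ is the origin of the $p$-restriction in the theorem.

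For the inductive step, fix $n\geq 2$ and assume \eqref{eq:ind} for all configurations with fewer vectors; set $S=\sum_{j=2}^n\|v_j\|^2$. If $S\geq 1$ and each $\|v_j\|^2\leq 1$, then $\Phi_p(S)=\varphi_p(S)$ and Corollary \ref{cor} applies directly. Otherwise I pair two of the vectors and split the contribution of one of them across two conditionally defined branches -- using the rotation-invariance of the corresponding $\xi$ on $S^2$ to make the split algebraically clean -- so that each branch becomes a valid instance of \eqref{eq:ind} with $n-1$ vectors; Lemma \ref{lem:NP-Phi} (extended convexity of $\Phi_p$ on $a+b\leq 2$) then averages the two inductive lower bounds into $c_p^p\Phi_p(S)$. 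Lemma \ref{lm:gamma-all-p} is used in place of Lemma \ref{lm:gamma-small-p} for residual estimates during the combining step that must remain valid across the full range $0<p<1$.

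The delicate part is organising this combining step: one must choose the pair and the split so that (i) each resulting branch is a valid lower-dimensional instance of the inductive hypothesis, (ii) the two $S$-values produced by the branches sum to at most $2$ so that Lemma \ref{lem:NP-Phi} is applicable, and (iii) the split behaves correctly across the transition at $x=1$ between the two pieces of $\Phi_p$. It is precisely here that the $\R^3$ vector structure pays off: the additional dimensions of $\xi_k$ beyond its projection on a fixed axis provide the room needed to perform the split cleanly.
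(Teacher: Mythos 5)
Your overall strategy is the paper's: Nazarov--Podkorytov induction on $n$, with Corollary \ref{cor} handling the bulk case $\sum\|v_j\|^2\geq 1$, $\|v_j\|\leq 1$, and the extended convexity Lemma \ref{lem:NP-Phi} combining two inductive instances obtained by merging two of the spherical summands via a random rotation. But there is a genuine gap at the base of your induction. You take the base to be $n=1$, i.e.\ $\E|U_1|^p\geq c_p^p\Phi_p(0)$, which is indeed equivalent to Lemma \ref{lm:gamma-small-p}. The problem is the step from $n=1$ to $n=2$ when $\|v_2\|<1$: your merging device requires \emph{two} vectors among $v_2,\dots,v_n$ to pair, and at $n=2$ there is only one. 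Merging $v_2$ with $e_1$ instead does not work: it destroys the normalization of the first slot (the inductive hypothesis is only stated with $e_1$ there), and it produces a factor $\|e_1+Q^\top v_2\|^p=\varphi_p(\|v_2\|^2+2\scal{e_1}{Q^\top v_2})$ multiplying a single value $\Phi_p(0)$, not an average of two values of $\Phi_p$ to which Lemma \ref{lem:NP-Phi} could be applied. The paper therefore takes $n=2$ as the base case and proves it by an explicit computation, $\E|U_1+xU_2|^p=\frac{(1+x)^{2+p}-(1-x)^{2+p}}{2(1+p)(2+p)x}$, together with Lemma \ref{lm:base}; that lemma is where \emph{both} gamma-function estimates enter (Lemma \ref{lm:gamma-all-p} at the endpoint $x=0$ and Lemma \ref{lm:gamma-small-p} at $x=1$, after a binomial-series lower bound and a concavity argument). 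Your plan to use Lemma \ref{lm:gamma-all-p} only for ``residual estimates during the combining step'' has no counterpart in a correct argument: the combining step (Case (iii) of the paper) uses only the inductive hypothesis and Lemma \ref{lem:NP-Phi}, with $a+b=2x\leq 2$ exactly as you say.

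A secondary issue: your dichotomy ``either $S\geq1$ with all $\|v_j\|\leq1$, or pair and split'' mishandles the subcase where some $\|v_k\|>1$. There $S>1$, so pairing two vectors and invoking Lemma \ref{lem:NP-Phi} is not available (one needs $a+b\leq2$), and Corollary \ref{cor} does not apply directly either. The paper treats this by homogeneity: rearrange so the largest vector occupies the $e_1$-slot, rescale, and thereby reduce to a configuration with all remaining norms at most $1$, which is then dispatched by the other two cases. You should add this reduction, and replace your $n=1$ base with the explicit $n=2$ computation.
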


Note $\scal{v_j}{\xi_j}$ has the same distribution as $\|v_j\|U_j$ {(\red by rotational invariance, $\scal{v_j}{\xi_j}$ has the same distribution as $\|v_j\|\scal{e_1}{\xi_j}$ and by the Archimedes' hat-box theorem, the projection $\scal{e_1}{\xi_j}$ is a uniform random variable on $[-1,1]$)}. Since $\Phi_p \geq \varphi_p$, this gives Theorem \ref{thm:cp} for $0 < p < 0.69$, thereby completing its proof. It remains to show Theorem \ref{thm:ind}, which is done by repeating almost verbatim the proof of Theorem 18 from \cite{CKT}. We repeat the argument for the convenience of the reader. To adjust the proof of the base case we will need the following lemma.

\begin{lemma}\label{lm:base}
For every $0 < x  <1$ and $0 < p < 0.69$, we have
\begin{align*}
\frac{(1+x)^{2+p}-(1-x)^{2+p}}{2(2+p)x} &> \frac{1+p}{\sqrt{\pi}}\Gamma\left(\frac{1+p}{2}\right)\left(\frac{2}{3}\right)^{p/2}\left(2^{1+p/2}-(3-x^2)^{p/2}\right) \\
&{\red = (\|Z\|_p/\sqrt{3})^p(1+p)\Phi_p(x^2).}
\end{align*}
\end{lemma}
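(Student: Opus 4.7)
The plan is to divide the inequality through by $(1+p)$, recognizing that it reduces to $\E|U_1+xU_2|^p > c_p^p \Phi_p(x^2)$ for $0<x<1$, $0 < p < 0.69$, and then to analyze the difference as a function of $y = x^2 \in [0, 1]$. The trapezoidal density of $U_1 + xU_2$ on $[-1-x, 1+x]$ yields the closed form $\E|U_1+xU_2|^p = \frac{(1+x)^{p+2} - (1-x)^{p+2}}{2x(p+1)(p+2)}$, matching the left-hand side of the lemma up to the factor $(1+p)$; similarly, $(1+p)c_p^p \Phi_p(x^2)$ matches the right-hand side via $c_p^p = \frac{1}{\sqrt\pi}(2/3)^{p/2}\Gamma((1+p)/2)$ and $\Phi_p(x^2) = 2\cdot 2^{p/2} - (3-x^2)^{p/2}$ on $[0,1]$. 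Denote the gap (after multiplication by $(1+p)$) by $G(y) = (1+p)\E|U_1 + \sqrt y\, U_2|^p - \psi(p)\bigl(2 - ((3-y)/2)^{p/2}\bigr)$.

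The endpoint checks are immediate. At $y = 0$, $G(0) > 0$ reduces to $\psi(p)(2 - (3/2)^{p/2}) < 1$, which is exactly Lemma \ref{lm:gamma-small-p} (and is the only place the restriction $p < 0.69$ enters). At $y = 1$, $G(1) > 0$ reduces to $\psi(p) < 2^{p+1}/(p+2)$, which is the content of Claim A and ultimately rests on Lemma \ref{lm:gamma-all-p}. A short calculation also shows $G'(0) = \frac{p(1+p)(1-\|Z\|_p^p)}{6} > 0$ and $G'(1) = \frac{p}{4}\bigl(\frac{2^{p+1}}{p+2} - \psi(p)\bigr) > 0$, using $\|Z\|_p < 1$ for $p < 2$ and Claim A respectively.

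For the interior, I would differentiate $G$ in $y$: using the identity
\[
\frac{d}{dy}\E|U_1 + \sqrt y\, U_2|^p = \frac{1}{4y^{3/2}}\int_0^{\sqrt y} t\bigl[(1+t)^p - (1-t)^p\bigr]\,dt
\]
(obtained from the closed form by differentiating in $x = \sqrt y$ and simplifying via integration by parts), the sign of $G'(y)$ is that of a comparison between $y^{-3/2}\int_0^{\sqrt y} t[(1+t)^p - (1-t)^p]\,dt$ and $\frac{p\psi(p)}{1+p}\bigl((3-y)/2\bigr)^{p/2-1}$, both increasing in $y$. The goal is to show that the minimum of $G$ on $[0, 1]$ is attained at an endpoint---for instance, by showing that $G$ admits at most one local maximum (and hence no interior local minimum) on $(0,1)$---which together with the endpoint checks will give $G > 0$ throughout.

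The main obstacle is precisely this interior step. The endpoint inequalities become tight in opposite corners of the parameter box: the margin in Lemma \ref{lm:gamma-small-p} at $y = 0$ vanishes both as $p \to 0^+$ and as $p \to 0.69^-$, while the margin in Claim A at $y = 1$ vanishes as $p \to 0^+$. Consequently the interior estimates must be quantitative and cannot rely on crude bounds. I expect the argument will require either splitting the $p$-range into subintervals treated separately, or a careful series/Taylor comparison of the two sides of the $G'$ inequality that pins down the number and nature of its zeros on $(0, 1)$.
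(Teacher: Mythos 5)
Your setup, endpoint computations and derivative identities are all correct, but the proof is not complete: the entire interior step --- showing that $G$ has no interior local minimum on $(0,1)$, or otherwise that its minimum is attained at an endpoint --- is left as a plan ("the main obstacle is precisely this interior step"), and it is exactly the hard part. Working with the exact function $G(y)=(1+p)\E|U_1+\sqrt{y}\,U_2|^p-\psi(p)\bigl(2-((3-y)/2)^{p/2}\bigr)$, concavity in $y$ is not evident and the sign analysis of $G'$ that you sketch would indeed require delicate, quantitative work across the whole box $(y,p)\in(0,1)\times(0,0.69)$. As written, the argument establishes positivity only at $y=0$ and $y=1$.

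The gap closes with one observation you did not make: discard all but the first two terms of the (all-positive) binomial series for the left-hand side,
\[
\frac{(1+x)^{2+p}-(1-x)^{2+p}}{2(2+p)x}=\sum_{k=0}^\infty\frac{1}{p+2}\binom{p+2}{2k+1}x^{2k}>1+\frac{p(p+1)}{6}x^2 ,
\]
and then prove the stronger inequality with $1+\frac{p(p+1)}{6}y$ in place of $(1+p)\E|U_1+\sqrt{y}\,U_2|^p$. After the substitution $y=x^2$ the resulting difference is linear in $y$ plus a positive multiple of the concave function $(3-y)^{p/2}$ (its second derivative in $y$ is $\frac{p}{2}(\frac{p}{2}-1)(3-y)^{p/2-2}<0$ since $p<2$), hence concave in $y$; a concave function positive at $y=0$ and $y=1$ is positive on all of $[0,1]$. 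The two endpoint checks are then precisely Lemma \ref{lm:gamma-small-p} (at $y=0$, the only place $p<0.69$ is used) and Lemma \ref{lm:gamma-all-p} (at $y=1$, since the truncated left-hand side there equals $1+\frac{p(p+1)}{6}$). This renders the entire interior analysis unnecessary, at the harmless cost of proving a slightly stronger statement.
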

\begin{proof}
We first observe that keeping only the first two terms in the binomial series expansion, we obtain
\[
\frac{(1+x)^{2+p}-(1-x)^{2+p}}{2(2+p)x} = \sum_{k=0}^\infty \frac{1}{p+2}\binom{p+2}{2k+1}x^{2k} > 1 + \frac{p(p+1)}{6}x^2,
\]
because all the terms are positive. It thus suffices to show that for every $0 < x < 1$ and $0 < p < 0.69$,
\[
1 + \frac{p(p+1)}{6}x + \frac{1+p}{\sqrt{\pi}}\Gamma\left(\frac{1+p}{2}\right)\left(\frac{2}{3}\right)^{p/2}\left((3-x)^{p/2}-2^{1+p/2}\right) > 0
\]
(we have replaced $x^2$ by $x$). By the evident concavity in $x$, it suffices to check that the inequality holds at the end-points $x=0$ and $x=1$ which follows from Lemmas \ref{lm:gamma-all-p} and \ref{lm:gamma-small-p}, respectively.
\end{proof}

{\red
\begin{proof}[Proof of Theorem \ref{thm:ind}]
For the case $n=2$, we need to show that for every $v\in\mathbb{R}^3$
\begin{equation}\label{eq:base}
\E|\langle e_1,\xi_1\rangle+\langle v,\xi_2\rangle|^p\gr c_p^p\Phi_p(\|v\|^2).
\end{equation}
We first reduce this claim to the case $\|v\|\ls 1$: If $\|v\|>1$ then due to rotational invariance
\begin{align*}
\E|\langle e_1,\xi_1\rangle+\langle v,\xi_2\rangle|^p &= \|v\|^p\E\left|\langle\frac{e_1}{\|v\|},\xi_1\rangle+\langle\frac{v}{\|v\|},\xi_2\rangle\right|^p\\
                          &= \|v\|^p\E|\langle v',\xi_1\rangle+\langle e_1,\xi_2 \rangle|^p,
\end{align*}
where $v'\in\mathbb{R}^3$ is such that $\|v'\|=\frac{1}{\|v\|}<1$. On the other hand, due to homogeneity,
\[
\Phi_p(\|v\|^2)=\phi_p(\|v\|^2)=\|v\|^p\phi_p(\|v'\|^2),
\]
so \eqref{eq:base} is equivalent to
\[
\E|\langle v',\xi_1\rangle+\langle e_1,\xi_2 \rangle|^p \gr c_p^p\phi_p(\|v'\|^2), \qquad \|v'\|\ls 1
\]
and since $\Phi_p(x)\gr \phi_p(x)$ for $x\in[0,1]$ it is indeed sufficient to restrict to the case $\|v\|\ls 1$.

In this case, we set $x:=\|v\|\ls 1$ and compute explicitly the left and right hand side of \eqref{eq:base} to deduce that
\begin{align*}
\E|\langle e_1,\xi_1\rangle+\langle v,\xi_2\rangle|^p &= \E|U_1+xU_2|^p 
= \frac{(1+x)^{2+p}-(1-x)^{2+p}}{2(1+p)(2+p)x}
%\\&\gr \frac{1}{\sqrt{\pi}}\Gamma\left(\frac{1+p}{2}\right)\left(\frac{2}{3}\right)^{p/2}(2^{1+p/2}-(3-x^2)^{p/2}) 
\gr c_p^p\Phi_p(x^2)
\end{align*}
with the aid of Lemma \ref{lm:base}.

For the inductive step, let $n\in\mathbb{N}$ and assume that \eqref{eq:ind} holds for every $ v_2,\ldots, v_{n-1}\in\mathbb{R}^3$. We let $v_2,\ldots,v_n\in\mathbb{R}^3$, $x:=\sum_{k=2}^n\|v\|^2$ and distinguish between the following mutually exclusive cases.

Case (i): $\|v_k\|>1$ for some $2\ls k\ls n$. Then $x>1$ and the wanted inequality is
\[
\E\left|\sum_{k=1}^n \langle v_k,\xi_k \rangle\right|^p \gr c_p^p \left(\sum_{k=1}^n \|v_k\|^2\right)^{p/2}
\]
with $v_1=e_1$. For $k=1,\ldots,n$ we let $v_k'=\frac{v_k^\ast}{\|v_1^\ast\|}$, where $v_1^*,\ldots,v_n^*$ is any rearrangement of $v_1,\ldots,v_n$ with $\|v_k^\ast\|\gr\|v_{k+1}^\ast\|$ for every $k=1,\ldots,n-1$. Then $\|v'_1\|=1$ and $\|v_k'\|\ls 1$ for $k=2,\ldots,n$. Due to homogeneity and the fact that $\langle v_1',\xi_1\rangle$ has the same distribution as $\langle e_1,\xi_1\rangle$ it is enough to prove
\[
\E\left|\langle e_1,\xi_1\rangle + \sum_{k=2}^n \langle v_k',\xi_k\rangle \right|^p \gr c_p^p \Phi_p\left(\sum_{k=2}^n\|v_k'\|^2\right).
\]
This is done on the next cases.

Case (ii): $\|v_k\|\ls 1$ for every $2\ls k\ls n$ and $x\gr 1$. We then again have that $\Phi_p(x)=\phi_p(x)$, and the desired inequality \eqref{eq:ind} coincides with
\[
\E\left|\sum_{k=1}^n \langle v_k,\xi_k \rangle\right|^p \gr c_p^p\left(\sum_{k=1}^n \|v_k\|^2\right)^{p/2}.
\]
Note that here we have
\[
\max_{1\ls k\ls n}\|v_k\|=1\ls \frac{1}{2}\left(1+x \right)=\frac{1}{2}\sum_{k=1}^n \|v_k\|^2,
\]
and since the distribution of $\sum_{k=1}^n \langle v_k,\xi_k \rangle$ is identical to that of $\sum_{k=1}^n\|v_k\|U_k$ it is clear that this case is handled by Theorem \ref{thm:s>2}.

Case (iii): $\|v_k\|\ls 1$ for every $2\ls k\ls n$ and $x< 1$. We use the fact that $(\xi_{n-1},\xi_n)$ has the same distribution as $(\xi_{n-1},Q\xi_{n-1})$ where $Q$ is a random orthogonal matrix independent of all the $\xi_k$'s to write
\begin{align*}
&\E\left|\langle e_1,\xi_1\rangle +\sum_{k=2}^n\langle v_k,\xi_k \rangle\right|^p = \E|\langle e_1,\xi_1\rangle + \langle v_2,\xi_2\rangle+\ldots+\langle v_{n-1},\xi_{n-1}\rangle+\langle Q^\top v_n,\xi_{n-1}\rangle|^p\\
                                   &\hspace{55pt}= \E_Q\left[\E_{(\xi_k)_{k=2}^{n-1}}|\langle e_1,\xi_1\rangle + \langle v_2,\xi_2\rangle+\ldots+\langle v_{n-2},\xi_{n-2}\rangle+\langle Q^\top v_n,\xi_{n-1}\rangle|^p\right].
\end{align*}
By the inductive hypothesis applied to $(v_2,\ldots,v_{n-2},v_{n-1}+Q^\top v_n)$ (conditioned on the value of $Q$) we get
\[
\E\left|\langle e_1,\xi_1\rangle +\sum_{k=2}^n\langle v_k,\xi_k \rangle\right|^p \gr c_p^p\E_Q\Phi_p(\|v_2\|^2+\ldots+\|v_{n-2}\|^2+\|v_{n-1}+Q^\top v_n\|^2).
\]
Finally note that
\begin{align*}
&\E_Q\Phi_p\left(\sum_{k=2}^{n-2}\|v_k\|^2+\|v_{n-1}+Q^\top v_n\|^2\right)=\\
&\hspace{55pt}= \E_Q\frac{\Phi_p(x+2\langle v_{n-1}+Q^\top v_n\rangle)+\Phi_p(x-2\langle v_{n-1}+Q^\top v_n\rangle)}{2} \gr \Phi_p(x)
\end{align*}
by the symmetry of $\Phi_p$ and Lemma \ref{lem:NP-Phi} (applied for $a=x+2\langle v_{n-1}+Q^\top v_n\rangle$ and $b=x-2\langle v_{n-1}+Q^\top v_n\rangle$ which satisfy $a+b=2x\ls 2$). This concludes the proof of the inductive step.
\end{proof}
}

\section{R\'enyi entropy: Proof of Theorem \ref{thm:ent}}\label{sec:ent}

For the lower bound,
\[
h_p\left(\sum_j a_jU_j\right) \geq h_1\left(\sum_j a_jU_j\right) \geq h_1(U_1),
\]
where the first inequality follows from the fact that $p \mapsto h_p(\cdot)$ is nonincreasing and the second one is justified by the entropy power inequality (see, e.g. Theorem 4 in \cite{DCT}). It remains to notice that $h_p(U_1) = \log 2$ for every $p$.

Towards the upper bound, we first note that for nonnegative functions $f$ and $g$, $0 <p < 1$, we have
\[
\left(\int f^p\right)^{\frac{1}{p}}\left(\int g^p\right)^{\frac{p-1}{p}} \leq \int fg^{p-1}.
\]
This follows directly from H\"older's inequality. Now, fix a unit vector $a$ in $\R^n$, let $f$ be the density of $\sum_j a_jU_j$ and $g(x) = (2\pi/3)^{-1/2}e^{-x^2/6}$, the density of $Z/\sqrt{3}$. In view of the above inequality, it suffices to show that
\[
\int fg^{p - 1} \leq \int gg^{p-1}.
\]
Since 
\[
g(x)^{p-1} = (2\pi/3)^{\frac{1-p}{2}}\sum_{k=0}^\infty \frac{1}{k!} \left(\frac{1-p}{6}\right)^kx^{2k},
\]
it suffices to show that for each positive integer $k$,
\[
\E\left(\sum a_jU_j\right)^{2k} = \int x^{2k}f(x) \dd x \leq \int x^{2k} g(x) \dd x = \E\left(\frac{Z}{\sqrt{3}}\right)^{2k}.
\]
This follows from the main result of \cite{LO}, that $C_p = \|Z\|_p/\sqrt{3}$, $p > 1$, see \eqref{eq:LO}.\hfill$\square$

We finish by remarking that the problem of maximising $h_p(\sum a_jU_j)$ under a variance constraint for a \emph{fixed} number of summands to the best of our knowledge remains wide open for $p \in (0,\infty)$. The case of Shannon entropy, $p=1$, seems to be the most important and interesting, see Question 9 in \cite{ENT0}, or Question 3 in \cite{BNZ}, also comprehensively presenting many other related and tangential problems. The natural conjecture is that: $h_1(\sum_{j=1}^n a_jU_j) \leq h_1(\sum_{j=1}^n \frac{1}{\sqrt{n}}U_j)$, for every unit vector $a$ in $\R^n$ (see 8.3.1 in \cite{BNZ} for a conceivable approach). The case $p=0$ is of course trivial, whereas the case $p=\infty$ amounts to the cube-slicing inequalities: $h_\infty(\sum_{j=1}^n a_jU_j) \leq h_\infty(U_1)$ is due to Hadwiger and, independently, Hensley (see \cite{Ha, He}), $h_\infty(\sum_{j=1}^n a_jU_j) \geq h_\infty((U_1+U_2)/\sqrt{2})$ is due to Ball (see \cite{B}).

\end{document}